\definecolor{lred}{rgb}{1,0.8,0.8}
\definecolor{lblue}{rgb}{0.8,0.8,1}
\definecolor{dred}{rgb}{0.6,0,0}
\definecolor{dblue}{rgb}{0,0,0.5}
\definecolor{violet}{rgb}{0.5804,0.0000,0.8275}
\definecolor{purple}{rgb}{0.2400,0.5700,0.2500}
\newcommand{\COMM}[2]{{
\ifthenelse{\equal{#1}{AT}}{\color{red}}{
\ifthenelse{\equal{#1}{NI}}{\color{blue}}{
\ifthenelse{\equal{#1}{KT}}{\color{green}}}}
[#1: #2]
}}
\newtheorem{lemma}{Lemma}[section]
\newtheorem{THEO}{Theorem}[section]
\newtheorem{EXAMP}[THEO]{Example}
\newcommand{\examp}{\begin{EXAMP} \rm}
\newcommand{\eexamp}{\end{EXAMP}}
\newtheorem{ALGo}[THEO]{Algorithm}
\newcommand{\algo}{\begin{ALGo} \rm}
\newcommand{\ealgo}{\end{ALGo}}
\def\Inprod#1#2{#1 \bullet #2}
\def\0{\bm{0}}
\def\1{\bm{1}}
\def\2{\bm{2}}
\def\3{\bm{3}}
\def\4{\bm{4}}
\def\5{\bm{5}}
\def\6{\bm{6}}
\def\7{\bm{7}}
\def\8{\bm{8}}
\def\9{\bm{9}}
\def\d{\bm{d}}
\def\e{\bm{e}}
\def\r{\bm{r}}
\def\v{\bm{v}}
\def\x{\bm{x}}
\def\A{\bm{A}}
\def\B{\bm{B}}
\def\C{\bm{C}}
\def\F{\bm{F}}
\def\G{\bm{G}}
\def\H{\bm{H}}
\def\I{\bm{I}}
\def\O{\bm{O}}
\def\Q{\bm{Q}}
\def\R{\bm{R}}
\def\X{\bm{X}}
\def\Y{\bm{Y}}
\def\Z{\bm{Z}}
\def\AC{\mathcal{A}}
\def\BC{\mathcal{B}}
\def\CC{\mathcal{C}}
\def\EC{\mathcal{E}}
\def\FC{\mathcal{F}}
\def\XC{\mathcal{X}}
\def\Real{\mathbb{R}}
\def\coneK{\mathbb{K}}
\def\spaceV{\mathbb{V}}
\def\SymMat{\mathbb{S}}
\def\SymN{\mathbb{N}}
\def\Integer{\mathbb{Z}}
\def\balpha{\bm{\alpha}}
\def\bgamma{\bm{\gamma}}
\def\optr3{\mbox{$\mbox{OPT}_{{\mbox{\scriptsize R}}_3}$}}
\def\dnnplus1{\mbox{$\SymMat^{1+n}_+\cap\SymN^{1+n}$}}
\def\Ibinary{\mbox{$I_{\mbox{\scriptsize bin}}$}}
\def\Ibox{\mbox{$I_{\mbox{\scriptsize box}}$}}
\def\balpha{\mbox{\boldmath $\alpha$}}
\def\bbeta{\mbox{\boldmath $\beta$}}
\def\salpha{\mbox{\scriptsize $\balpha$}}
\def\sbeta{\mbox{\scriptsize $\bbeta$}}
\def\bgamma{\mbox{\boldmath $\gamma$}}
\def\balpha{\bm{\alpha}}
\def\bbeta{\bm{\beta}}
\def\bgamma{\bm{\gamma}}
\def\sAC{\mbox{\scriptsize $\AC$}}
\def\sFC{\mbox{\scriptsize $\FC$}}
\def\Ibinary{\mbox{$I_{\mbox{\scriptsize bin}}$}}
\def\Ibox{\mbox{$I_{\mbox{\scriptsize box}}$}}
\def\Inprod#1#2{\left\langle#1, \, #2\right\rangle}
\def\inprod#1#2{\langle#1, \, #2\rangle}
\def\Ibinary{I_{\mathrm{bin}}}
\def\Ibox{I_{\mathrm{box}}}
\def\deg{\mathrm{deg}}
\def\supp{\mathrm{supp}}
\def\BBC{H}
\def\MomentBBC{M}
\def\Lip{L_f}
\newtheorem{example}[THEO]{Example}
\def\matBP{BBCPOP}
\begin{document}

\begin{center}

\begin{Large}

\matBP: 
A Sparse Doubly Nonnegative Relaxation \\ of Polynomial Optimization Problems \\ with Binary, Box and Complementarity Constraints

\end{Large}

%
%
%
\end{center}

%


\setcounter{page}{1}

\noindent

\begin{center}
\vspace{-0.2cm}
$\mbox{N. Ito}^{\star}$, $\mbox{S. Kim}^{\dagger}$, $\mbox{M. Kojima}^{\ddagger}$, $\mbox{A. Takeda}^{\mathsection}$, and
$\mbox{K.-C. Toh}^{\mathparagraph}$ 
\vspace{0.1cm}

March, 2018
\end{center}

\noindent
{\bf Abstract. }
The software package \matBP\ is a MATLAB implementation of a hierarchy of sparse doubly nonnegative (DNN) relaxations 
of a class of polynomial optimization (minimization) problems (POPs) with binary, box and complementarity 
(BBC) constraints.  Given a POP in the class and a relaxation order, 
\matBP\ constructs a simple conic optimization problem (COP), 
which serves as a DNN relaxation of the POP, and then solves the COP by 
applying the bisection and projection (BP) method. 
The COP is expressed  with a linear objective function 
and constraints described as a single hyperplane and two cones, which are
the Cartesian product of positive semidefinite cones  
and a polyhedral cone induced from the 
BBC constraints. \matBP\ aims to compute a tight lower bound  for
the optimal value of a large-scale POP in the class 
that is beyond the comfort zone of existing software packages. 
The robustness, reliability and efficiency of \matBP\ are demonstrated in comparison to the state-of-the-art 
software SDP package SDPNAL+ on randomly generated sparse POPs of degree 2 and 3 with up to a few thousands variables,  
and ones of degree 4, 5, 6. and 8 with up to a few hundred variables. 
Comparison with other BBC POPs that arise from combinatorial optimization problems such
as quadratic assignment problems are also reported.
The software package {\bf \matBP} is 
available at https://sites.google.com/site/bbcpop1/.
\vspace{0.1cm}
  
\noindent
{\bf Key words. }  MATLAB software package, High-degree polynomial optimization problems with binary, box and complementarity
constraints, Hierarchy of doubly nonnegative relaxations, Sparsity, Bisection and projection methods, Tight lower bounds, Efficiency.

\vspace{0.2cm}

\noindent
{\bf AMS Classification. } 
90C20,  	
90C22,  	
90C25, 	
90C26.  	

\vspace{0.1cm}
 
 \begin{small}
\noindent
\parbox[t]{0.5cm}{$\star$}
\parbox[t]{14.9cm}{Department of Mathematical Informatics,
        			The University of Tokyo, Tokyo 113-8656, Japan. 
        			This work was supported by Grant-in-Aid for JSPS Research Fellowship JP17J07365.
			({\tt naoki\_ito{@}mist.i.u-tokyo.ac.jp}).
}

\medskip
 
\noindent
\parbox[t]{0.5cm}{$\dagger$}
\parbox[t]{14.9cm}{Department of Mathematics, Ewha W. University,
52 Ewhayeodae-gil, Sudaemoon-gu, Seoul 03760 Korea. 
The research was supported
by  
NRF 2017-R1A2B2005119.
({\tt skim@ewha.ac.kr}).
}

\medskip

\noindent
\parbox[t]{0.5cm}{$\ddagger$}
\parbox[t]{14.9cm}{
Department of Industrial and Systems Engineering,
Chuo University, Tokyo 112-8551 Japan.
This research was supported by Grant-in-Aid for Scientific Research (A) 26242027.
\\                
({\tt kojimamasakzu@mac.com}).
}

\medskip

\noindent
\parbox[t]{0.5cm}{$\mathsection$}
\parbox[t]{14.9cm}{
 Department of Mathematical Analysis and Statistical Inference, 
The Institute of Statistical Mathematics, 
10-3 Midori-cho, Tachikawa, Tokyo 190-8562, Japan
           The work of this author was supported by Grant-in-Aid for Scientific Research (C), 15K00031.
    ({\tt atakeda{@}ism.ac.jp}).
}

\medskip

\noindent
\parbox[t]{0.5cm}{$\mathparagraph$}
\parbox[t]{14.9cm}{Department of Mathematics and Institute of Operations Research and Analytics, 
	National University of
         Singapore, 10 Lower Kent Ridge Road, Singapore 119076. 
         Research supported in part by the Ministry of Education, Singapore, Academic Research Fund under Grant R-146-000-256-114. 
 ({\tt mattohkc@nus.edu.sg}).
}

\end{small}

\newpage

\section{Introduction}

We introduce a Matlab package \matBP \ for computing a tight lower bound of the
optimal value of large-scale sparse
polynomial optimization problems (POPs) 
with  binary, box and complementarity (BBC) constraints. 
Let $f_0$ be a real valued polynomial function defined on the $n$-dimensional Euclidean space $\Real^n$, 
$\Ibox$ and $\Ibinary$
a partition of $N\equiv\{1,2,\ldots,n\}$, {\it i.e.}, $\Ibox\cup\Ibinary = N$ and 
$\Ibox\cap\Ibinary=\emptyset$, and $\CC$ a family of subsets of $N$. 
\matBP \ finds a lower bound for the optimal value $\zeta^*$ of POPs described as
\begin{eqnarray}
\zeta^* = \min_{\x} \left\{ f_0(\x) \Big|
\begin{array}{l}
x_i\in[0,1] ~ (i\in\Ibox) \ \mbox{(box constraint)}, \\
x_j\in\{0,1\} ~ (j\in\Ibinary) \ \mbox{(binary constraint)}, \\
\prod_{j\in C} x_j = 0 ~ (C \in \CC) \ \mbox{(complementarity constraint)} 
\end{array}
 \right\}. \label{POP0} 
\end{eqnarray}
The above BBC constrained
POP \eqref{POP0} has been widely studied as they have many applications in combinatorial optimization, 
signal processing \cite{GERSHMAN10,LUO10}, transportation
engineering \cite{AAA2016}, and optimal power flow \cite{GHADDAR14,MOLZAHN14}. 

\matBP \ provides a MATLAB implementation to automatically generate a hierarchy of sparse doubly 
nonnegative (DNN) relaxations of 
POP \eqref{POP0} 
together with 
the BP (bisection and projection) method 
as a solver for the resulting DNN relaxations problems.
This software
is developed to find  approximate optimal values of larger-scale POPs 
which are beyond the range that can be comfortably handled by
existing software packages.
More precisely, an approximate optimal value provided by \matBP \ for a POP is a valid lower bound 
for the actual optimal value of the POP
that is generally NP-hard to compute. 
The hierarchy of sparse DNN relaxations implemented can be regarded as 
a variant of the hierarchies of 
sparse SDP relaxations considered in \cite{WAKI2008}.
The BP method was first introduced by Kim, Kojima and Toh in \cite{KIM2013} 
for the dense doubly nonnegative (DNN) relaxation of a class of QOPs such as binary quadratic problems,
maximum clique problems, quadratic multi-knapsack problems, and quadratic assignment problems,
and improved later in \cite{ARIMA2017}. 
In their subsequent work \cite{KIM2016}, the BP method was generalized to handle
the hierarchy of sparse DNN relaxations of a class of binary and box 
constrained POPs. Some numerical results on large-scale binary and box constrained POPs 
and the comparison of the BP method to SparsePOP \cite{WAKI2008} combined with SDPNAL+ \cite{YST2015}
were 
reported in \cite{KIM2016}. In this paper, 
we present an extended version of the BP method for a hierarchy of sparse DNN relaxations of 
a class of 
BBC constrained POPs.

Existing
software packages available for solving general POPs include GloptiPoly \cite{GLOPTIPOLY2003},
 SOSTOOLS \cite{SOSTOOLS} and SparsePOP \cite{WAKI2008}.
Notable numerical methods for POPs that have not been announced as software packages
include (i) the application of the package SDPNAL \cite{SDPNAL} for solving SDP relaxations of POPs
in \cite{NIE2012}; (ii) the application of the solver SDPT3 to solve the
bounded degree sums of squares (BSOS) SDP relaxations in \cite{WEISSER17} and the sparse BSOS relaxations in \cite{WEISSER17}.
GloptiPoly \cite{GLOPTIPOLY2003}, which is designed for the hierarchy of the dense
semidefinite (SDP) relaxations by Lasserre in \cite{LASSERRE2001}, and SOSTOOLS \cite{SOSTOOLS}, which implements
the SDP relaxation by Parrilo \cite{PARRILO2003},
 can handle small-scale dense POPs with at most 20-40 variables. 
By exploiting the structured  sparsity in POPs, 
the hierarchy of 
sparse SDP relaxations was proposed in \cite{WAKI2006} and implemented
as SparsePOP  \cite{WAKI2008}.   
It was shown that SparsePOP could solve medium-scale  general POPs 
of degree up to~4, and unconstrained POPs with 5000 variables \cite{SparsePOP_UG} in less than a minute 
if the sparsity in POPs can be characterized as a banded sparsity pattern 
such as in the
minimization of the chained wood and chained singular functions. More recently, 
BSOS \cite{WEISSER17} and its sparse version of BSOS \cite{WEISSER17} based on a bounded
sums of squares of polynomial have been introduced, and it was 
shown that sparse BSOS \cite{WEISSER17} could solve
POPs with up to 1000 variables for the same 
examples.
We should note that 
solving large-scale unconstrained minimization of such  functions is much easier than solving constrained POPs.
In \cite{NIE2012}, it was demonstrated that  SDPNAL \cite{SDPNAL} can solve
POPs 
of degree up to 6 and 100 variables could be solved by assuming block diagonal structure sparsity. 

Despite  efforts to solve large-scale POPs by proposing new theoretical frameworks and various 
numerical techniques,
large-scale  POPs still remain 
very challenging
to solve. This difficulty arises from solving
large-scale SDP relaxations by SDP solvers, for instance, SeDuMi \cite{STURM99}, SDPA \cite{SDPAv7}, 
SDPT3 \cite{TOH98}, and SDPNAL+ \cite{YST2015}. 
SDP solvers based on primal-dual interior-point algorithms such as
SeDuMi, SDPA, SDPT3  have limitation in solving dense 
SDP relaxations where the size of the variable matrix is at most several thousands. 
As the size of SDP relaxations in the hierarchy of SDP relaxations
for POPs grows exponentially with a parameter called the relaxation order determined by the degree of POPs, it is impossible to solve  POPs with 20-30 variables
using the SDP solvers based on  primal-dual interior-point algorithms unless some special
features of POPs such as 
sparsity or symmetry are utilized. Recently announced SDPNAL+ \cite{YST2015}
employs a majorized semismooth Newton-CG augmented Lagrangian method and has illustrated its superior performance
of solving large-scale  SDPs.

SDPNAL+, however, tends to exhibit some numerical difficulties when
 handling degenerate SDP problems, in particular 
problems with
many equality constraints. In these cases, 
it often cannot solve the degenerate SDP problems accurately and the lower bounds computed
are 
even invalid. 
It is frequently observed  in numerical computation that
the SDP 
problems in the hierarchy of dense or sparse SDP relaxations generally become more 
degenerate as the relaxation order is
increased to obtain tight lower bounds for the optimal value of POPs.
As shown in \cite{KIM2016},
the degeneracy also increases as  the number of variables and the degree of constrained POPs 
become larger.  Moreover,
the SDP relaxation of a high-degree POP 
can be degenerate even with the first 
relaxation order in many cases.
When a POP from applications can be represented in several different formulations,
the degeneracy of each formulation may differ.
In particular, it was discussed  in \cite{ITO17} that
different formulations of equivalent conic relaxations of a combinatorial 
quadratic optimization problem (QOP) can significantly affect the degeneracy. It was also shown through
the numerical results 
 that SDPNAL+ worked efficiently on some conic relaxation formulations of a QOP but 
 its performance on some other formulations was not satisfactory because of the degeneracy.
 Because of the limitation of SDPNAL+, the BP method in \cite{KIM2013,ARIMA2017} 
 was specifically designed to handle potentially degenerate SDP relaxations problems.
The BP method was demonstrated to be robust against the degeneracy in \cite{KIM2016}, while
applying SDPNAL+ to such degenerate  cases often leads to invalid bounds
and slow convergence. 
Thus, it is essential to have a solver that can deal with the degeneracy of SDP relaxations for computing 
valid lower bounds of large-scale or high-degree POPs.

The robustness of \matBP\ is guaranteed by the results in \cite{ARIMA2017} where the lower bounds obtained by 
the BP method is shown to be always valid. In addition, \matBP\ can effectively handle degenerate DNN relaxations of 
large-scale and/or high-degree POPs. 
We show through numerical experiment that \matBP\ can efficiently and robustly compute 
valid lower bounds for the optimal values of large-scale sparse POPs with 
BBC constraints  in comparison to SDPNAL+.
The test instances whose valid lower bounds could be obtained successfully by \matBP\ in 2000 seconds include, 
for example, a degree 3 binary POP with complementarity constraints in 1444 variables  
and a degree 8 box constrained POP in 120 variables.
For these instances, SDPNAL+ did not provide a comparable bound  within 20000 seconds.

A distinctive feature of  our package \matBP\ is that  it  not only 
automatically generate DNN relaxations 
for a BBC constrained POP 
but also integrate their computations with the robust 
BP algorithm that is designed specifically for solving them.
Other available
software packages and numerical methods for POPs such as   GlotiPoly  \cite{GLOPTIPOLY2003}, SOSTOOLs \cite{SOSTOOLS}, SparsePOP \cite{WAKI2008}, BSOS \cite{TOH17}, and SBSOS~\cite{WEISSER17}
in fact first generate the SDP relaxations for the underlying POPs, and then rely on 
existing SDP solvers such as SDPT3 or SeDuMi to solve the resulting SDP problems.
As a result, their performance are heavily dependent on the SDP solver chosen.

This paper is organized as follows: 
In Section 2, we describe a simple COP~\eqref{eq:generalCOP}  to which our DNN relaxation 
of POP~\eqref{POP0} is reduced. We also briefly explain the accelerated proximal gradient method and the BP method for solving the COP. 
In Section 3, we first describe how to exploit the sparsity in the POP \eqref{POP0} and then derive a simple COP of 
the form~\eqref{eq:generalCOP} to serve as the sparse DNN relaxations of 
\eqref{POP0}. 
In Section 4, we present computational aspects of \matBP \ and issues related to its 
efficient implementation. Section 5 contains numerical results on various BBC constrained 
POPs. Finally, we conclude in Section 6.

\section{Preliminaries}

We describe a simple COP~\eqref{eq:generalCOP}  in Section 2.1, 
the accelerated proximal 
gradient method \cite{BECK2009} in Section 2.2 and 
the bisection and projection (BP) method \cite{KIM2013,ARIMA2017} in Section 2.3. 
These two methods are designed to solve COP~\eqref{eq:generalCOP} and implemented in \matBP. 
In Section 3, we will reduce a DNN relaxation of POP~\eqref{POP0} to 
the simple COP~\eqref{eq:generalCOP}. 

Let $\spaceV$ be a finite dimensional vector space endowed with an inner product $\inprod{\cdot}{\cdot}$ 
and its induced norm $\left\| \cdot \right\|$ such that $\left\| \X \right\| = \left(\inprod{\X}{\X} \right)^{1/2}$ 
for every $\X \in \spaceV$. 
Let $\coneK_1$ and $\coneK_2$ be closed convex cones in $\spaceV$ satisfying 
$(\coneK_1 \cap \coneK_2)^* = (\coneK_1)^* + (\coneK_2)^*$, where
$\coneK^* = \{ \Y \in \spaceV :  \inprod{\X}{\Y} \geq 0 \ \mbox{for all } \X \in \coneK\}$ denotes the 
dual cone of a cone $\coneK \subset \spaceV$. 
Let $\Real^n$ be the space of $n$-dimensional column vectors, 
$\Real^n_+$ the nonnegative orthant of $\Real^n$, 
$\SymMat^n$ the space of $n \times n$ symmetric matrices,  
$\SymMat^n_+$ the cone of $n \times n$ symmetric positive semidefinite matrices,  
and $\SymN^n $ the cone of $n \times n$ symmetric nonnegative matrices. 

\subsection{A simple conic optimization problem}

Let  $\Q^0 \in \spaceV$ and $\O \not= \H^0 \in \coneK_1^* + \coneK_2^*$.
We introduce the following conic optimization problem (COP):
\begin{equation}
  \eta^* = \min_{\Z} \{\inprod{\Q_0}{\Z} \mid \inprod{\H_0}{\Z} = 1, ~\Z \in \coneK_1 \cap \coneK_2\}. \label{eq:generalCOP}
\end{equation}
If we take $\spaceV=\SymMat^m$, $\coneK_1=\SymMat^m_+$, and $\coneK_2$ a 
polyhedral cone in $\SymMat^m$ 
for some $m$, respectively, then the problem \eqref{eq:generalCOP} represents a general SDP. If in addition 
$\coneK_2 \subset \SymN^m$, then it forms a DNN optimization problem.  
Let $\G(y_0) = \Q_0 - y_0 \H_0$. 
The dual of \eqref{eq:generalCOP} can be described as 
\begin{eqnarray}
  y_0^* & = &  \max_{y_0,\Y_2}\{y_0 \mid  \Q_0 - y_0 \H_0 - \Y_2 \in \coneK_1^*, \ \Y_2 \in \coneK_2^*\}
\label{eq:generalDualCOP0} \\
 & = & \max_{y_0}\{y_0 \mid \G(y_0) \in \coneK_1^* + \coneK_2^*\}.\label{eq:generalDualCOP}
\end{eqnarray}
As shown in \cite[Lemma 2.3]{ARIMA2014},
strong duality  holds for \eqref{eq:generalCOP} and \eqref{eq:generalDualCOP}, {\it i.e.}, $\eta^*=y_0^*$.

Since $\H_0 \in \coneK_1^* + \coneK_2^*$ 
in \eqref{eq:generalDualCOP}, we have the following inequality from \cite{KIM2013}: 
\begin{equation}
y_0 \leq y_0^* ~ \text{if and only if} ~ \G(y_0) \in \coneK_1^* + \coneK_2^*.
\label{eq:feasibility}
\end{equation}
Therefore, the approximate value of $y_0^*$ can be computed by using the bisection method 
if the feasibility of any given $y_0$, {\it i.e.}, whether 
$\G(y_0) \in \coneK_1^* + \coneK_2^*$, 
can be determined. 
The recently proposed bisection and projection (BP) method \cite{KIM2013} 
(BP Algorithm described in Section 2.3) 
provides precisely
the feasibility test for any given $y_0$  through a numerical algorithm (APG Algorithm described in Section 2.2)
that is 
based 
on the accelerated proximal gradient method, where we employed the version in \cite{BECK2009}
that is modified from \cite{Nesterov}.

\subsection{The accelerated proximal gradient algorithm for feasibility test} \label{sec:APGforFeas}

For an arbitrary fixed $y_0$, let $\G = \G(y_0)$ for simplicity of notation. 
We also use the notation $\Pi_{\coneK}(\Z)$ to denote the metric projection of $\Z \in V$ onto a closed 
convex cone $\coneK \subset V$. 
Then  the problem of  testing whether $\G \in \coneK_1^* + \coneK_2^*$ leads to the following problem:
\begin{align}
   f^*
   &=\min_{\Y}\Bigl\{\frac{1}{2}\|\G - \Y\|^2 ~\Big|~ ~ \Y \in \coneK_1^* + \coneK_2^*\Bigr\} 
\label{eq:regressionFISTA1cone} \\
   &=\min_{\Y_1,\Y_2}\Bigl\{\frac{1}{2}\|\G - (\Y_1 + \Y_2)\|^2 ~\Big|~ ~ \Y_1 \in \coneK_1^*, ~ \Y_2 \in \coneK_2^*\Bigr\}
\nonumber \\
   &=\min_{\Y_1}\Bigl\{f(\Y_1):=\frac{1}{2}\|\Pi_{\coneK_2}(\Y_1 - \G)\|^2 ~\Big|~ ~ \Y_1 \in \coneK_1^*\Bigr\}  .
\label{eq:regressionFISTA}
\end{align}
Here we note that 
\begin{eqnarray*}
\min_{\Y_2}\Bigl\{
\frac{1}{2}\|\G - (\Y_1 + \Y_2)\|^2 ~\Big|~ ~ \Y_2 \in \coneK_2^* \Bigr \} 
& = &
\frac{1}{2} \| \G-\Y_1 - \Pi_{\coneK^*_2}(\G - \Y_1) \|^2 \\
& = & \frac{1}{2} \| \Pi_{\coneK_2}( \Y_1 - \G) \|^2, 
\end{eqnarray*}
where the last equality above follows from Moreau's decomposition theorem \cite{MOREAU62,Combettes2013}.
Obviously, $f^*\geq 0$, and  $f^*=0$ if and only if $\G\in\coneK_1^*+\coneK_2^*$. 

The gradient of the objective function $f(\Y_1)$ of \eqref{eq:regressionFISTA} is  given by
$\nabla f(\Y_1) = \Pi_{\coneK_2}(\Y_1 - \G)$. 
As the projection operator $\Pi_{\coneK}$ onto a convex set $\coneK$ is nonexpansive \cite[Proposition 2.2.1]{Bertsekas2003Convex}, we have that
\begin{equation*}
  \|\nabla f(\Y_1) - \nabla f(\Y_1')\| \leq \|(\Y_1 - \G) - (\Y_1' - \G)\| = \|\Y_1 - \Y_1'\| \quad (\Y_1,\Y_1'\in \spaceV).
\end{equation*}
Therefore, the gradient $\nabla f(\Y_1)$ is Lipschitz continuous with the Lipschitz constant $\Lip=~1$.
The KKT condition for $(\Y_1,\Y_2)$ to be the optimal solution of 
\eqref{eq:regressionFISTA1cone} is given by 
\begin{gather*}
  \X = \G - \Y_1 - \Y_2, \quad \inprod{\X}{\Y_1} = 0, \quad \inprod{\X}{\Y_2} = 0,\\
  \X\in\coneK_1\cap\coneK_2, \quad \Y_1\in \coneK_1^*, \quad \Y_2\in \coneK_2^*.
\end{gather*}
When the KKT condition above holds, we have that 
$\G \in \coneK_1^* + \coneK_2^*$ if and only if $\|\X\| = 0$.

Assume that the metric projections $\Pi_{\coneK_1}$ and $\Pi_{\coneK_2}$ onto the cones $\coneK_1$ and $\coneK_2$ can be computed
without difficulty.  
In \cite{KIM2013}, APG Algorithm \cite{BECK2009} below 
is applied to \eqref{eq:regressionFISTA} to determine whether  $f^*=0$  numerically.

Our APG based algorithm employs  the following error criterion
\begin{equation*}
  g(\X,\Y_1,\Y_2) = \max\Big\{
        \frac{ \inprod{\X}{\Y_1} }{ 1+\|\X\|+\|\Y_1\| }, 
        ~\frac{ \inprod{\X}{\Y_2} }{ 1+\|\X\|+\|\Y_2\| }, 
        ~\frac{ \Pi_{\coneK_1^*}(-\X) }{ 1+\|\X\| }, 
        ~\frac{ \Pi_{\coneK_2^*}(-\X) }{ 1+\|\X\| }
        \Big\}
\end{equation*}
to check whether $(\Y_1,\Y_2) = (\Y_1^k,\Y_2^k) \in \coneK_1^* \times \coneK_2^*$ 
satisfies the KKT conditions approximately. 
It terminates if $\|\X^k\|<\epsilon$ or if $\|\X^k\| \geq \epsilon$ and $g(\X^k,\Y_1^k,\Y_2^k) < \delta$ for sufficiently small positive 
$\epsilon$ and $\delta$, say $\epsilon = 10^{-12}$ and $\delta = 10^{-12}$. 
Note that $f(\Y_1^k)$ corresponds to $\frac{1}{2}\|\X^k\|^2$. Hence if $\|\X^k\|$ becomes smaller 
than $\epsilon > 0$, 
we may regard $(\Y_1^k,\Y_2^k) \in \coneK_1^* \times \coneK_2^*$ is an approximate 
optimal solution of \eqref{eq:regressionFISTA1cone} and $\G = \G(y_0) \in(\coneK_1^*+\coneK_2^*)$, 
which implies that $y_0$ is a feasible solution of the problem \eqref{eq:generalDualCOP} and 
$y_0 \leq y_0^*$. 
On the other hand, if $\|\X^k\| \geq \epsilon$ and $g(\X^k,\Y_1^k,\Y_2^k) < \delta$, then the KKT optimality condition is 
almost satisfied, and we classify that $\G=\G(y_0)$ does not lie 
in $\coneK_1^*+\coneK_2^*$ according to
$\|\X^k\| \geq \epsilon$. In the latter case, we determine that $y_0$ is not a 
feasible solution of \eqref{eq:generalDualCOP} and $y_0 > y_	0^*$.

\medskip

\noindent
\rule[0mm]{162mm}{0.5mm}\\
{\bf APG Algorithm} (the accelerated proximal gradient algorithm \cite{BECK2009} for feasibility test )\\
\rule[2mm]{162mm}{0.2mm}\vspace{-3mm}
    \begin{algorithmic}
    \STATE Input: $\G \in \spaceV$, $\Y_1^0 \in \spaceV$, $\Pi_{\coneK_1}$, $\Pi_{\coneK_2}$, $\epsilon>0$, $\delta>0$, $k_{max}>0$,  
    \STATE Output: $(\X,\Y_1,\Y_2) = (\X^{k},\Y_1^{k},\Y_2^{k})$
    \STATE Initialize: $t_1 \leftarrow 1, ~L \leftarrow  L_f(=1), ~\overline{\Y}_1^{1} \leftarrow \Y_1^0$ 
    \FOR{$k = 1,\dots,k_{max}$}
      \STATE $\Y_1^k \leftarrow \Pi_{\coneK_1^*}\Big(\overline{\Y}_1^k - \frac{1}{L}\Pi_{\coneK_2}(\overline{\Y}_1^k - \G)\Big)$  
      \STATE $\Y_2^{k+1} \leftarrow \Pi_{\coneK_2^*}(\G - \Y_1^{k})$, ~ $\X^{k} \leftarrow \G - \Y_1^{k} - \Y_2^{k}$
      \IF{ $\|\X^{k}\|<\epsilon$  (implying  $\G \in \coneK_1^*+\coneK_2^*$) or 
        {\bf if}  $\|\X^{k}\| \geq \epsilon$ and $g(\X^{k},\Y_1^{k},\Y_2^{k}) < \delta$
        (implying  $\G \not \in \coneK_1^*+\coneK_2^*$) 
	}
        \STATE \textbf{break}
      \ENDIF
      \STATE $t_{k+1}  \leftarrow  \frac{1+\sqrt{1+4t_k^2}}{2}$  
      \STATE $\overline{\Y}_1^{k+1} \leftarrow \Y_1^{k} + \frac{(t_k) - 1}{t_{k+1}} (\Y_1^{k} - \Y_1^{k-1})$  
    \ENDFOR
    \end{algorithmic}
\rule[4mm]{162mm}{0.5mm}\\

We note that 
the sublinear convergence of APG Algorithm, in the sense that $f(\Y_1^k) - f^* \leq O(1/k^2)$, 
is ensured for any optimal solution $\Y_1^*$ of \eqref{eq:regressionFISTA} in \cite[Theorem 4.4]{BECK2009}. 

\subsection{The bisection and projection algorithm for COP} \label{sec:BP}

As numerically small numbers $\epsilon >0$ and $\delta > 0$ must be used in APG Algorithm 
to decide whether $\|X^k\|$ is equal to  $0$
on a finite precision floating-point arithmetic machine, 
an infeasible $y_0$ can sometimes be erroneously
determined as a feasible solution of the problem \eqref{eq:generalDualCOP}  by 
APG Algorithm. Likewise, there is also a small possibility for a feasible solution to
be wrongly declared as infeasible due to numerical error.
As a result, the feasibility test based on APG Algorithm may not be always correct.

To address the validity issue of the result obtained from APG Algorithm, 
an improved BP method was introduced in  \cite{ARIMA2017}.  
Here, a valid lower bound $y_0^{v\ell}$ for the optimal value $y_0^*$ is always generated by the improved BP method,
which assumes the following two conditions  for a given interior point $\I$ of $\coneK_1^*$:
\begin{description}
  \item[(A1)] There exists a known positive number $\rho>0$ such that $\inprod{\I}{\Z}\leq\rho$ for every feasible solution $\Z$ 
of \eqref{eq:generalCOP}. 
  \item[(A2)]  For each $\Z\in\spaceV$, $\lambda_{\min}(\Z) = \sup\{\lambda \mid \Z - \lambda \I \in \coneK_1^*\}$ can be computed accurately at a moderate cost.
\end{description}
If $\coneK_1 = \coneK_1^*= \SymMat_+^n$ and $\I$ is the identity matrix, 
then $\inprod{\I}{\Z}$ is the trace of $\Z$ and $\lambda_{\min}(\Z)$ is the minimum eigenvalue of $\Z$. 
Under the  assumption (A1), the problem \eqref{eq:generalCOP} is equivalent to 
\begin{equation}
  \min_{\Z} \Big\{ \Inprod{\Q_0}{\Z} ~\Big|~ \inprod{\H_0}{\Z} = 1, ~\inprod{\I}{\Z} \leq \rho, ~\Z \in \coneK_1 \cap \coneK_2 \Big\}. \label{eq:generalCOPIX}
\end{equation}
Its dual 
\begin{equation}
  \sup_{y_0,\Y_2,\mu}\{y_0 + \rho\mu \mid \G(y_0) - \Y_2 - \mu\I \in \coneK_1^*, ~ \Y_2\in\coneK_2^*, ~\mu\leq0\}\label{eq:generalDualCOPIX}
\end{equation}
is equivalent to \eqref{eq:generalDualCOP} with the optimal value $y_0^*$. 
Suppose that $\bar{y}_0\in\Real$ and $\bar{\Y}_2 \in \coneK_2^*$ are given. 
Let $\bar{\mu}=\min\{0, \lambda_{\min}(\G(\bar{y}_0) - \bar{\Y}_2)\}$. 
Then $(y_0,\Y_2,\mu) = (\bar{y}_0,\bar{\Y}_2,\bar{\mu})$ is a feasible solution 
of \eqref{eq:generalDualCOPIX}, and $y_0^{v\ell} = \bar{y}_0 + \rho\bar{\mu}$ provides a valid lower bound for $y_0^*$. 
If $(\bar{y}_0,\bar{\Y}_2)$ is a feasible solution of \eqref{eq:generalDualCOP0}, 
then $y_0^{v\ell}=\bar{y}_0$ from $\bar{\mu} = 0$. 
The improved BP method in  \cite{ARIMA2017} is described in BP Algorithm below. 

\medskip

\noindent
\rule[0mm]{162mm}{0.5mm}\\
{\bf BP Algorithm} (The improved bisection-projection algorithm \cite{ARIMA2017})\\
\rule[2mm]{162mm}{0.2mm}\vspace{-3mm}
\begin{algorithmic}
  \STATE Input: $y_0^\ell\leq y_0^u,  ~tol>0, \rho>0$, \quad $\epsilon>0, \delta>0, \eta_r, \Pi_{\coneK_1}, \Pi_{\coneK_2}, k_{\max}$.
  \STATE Output: $y_0^{v\ell}$
  \STATE Initialize: $y_0^m \leftarrow \frac{y_0^\ell + y_0^u}{2}$ if $-\infty <y_0^\ell$; otherwise, $y_0^m \leftarrow   y_0^u$. 
$\hat{\Y}_1 \leftarrow \Pi_{\coneK_1^*}\big(\G(y_0^m)\big)$. $y_0^{v\ell}  \leftarrow  -\infty$.
  \WHILE{$y_0^u - y_0^\ell > tol$}
    \STATE $\hat{\Y}_1^\mathrm{init} \leftarrow \hat{\Y}_1$
    \STATE $(\hat{\X},\hat{\Y_1},\hat{\Y_2}) \leftarrow $ The output of APGR Algorithm with inputs $\scriptstyle\big(\G(y_0^m),\hat{\Y}_1^{\mathrm{init}},\Pi_{\coneK_1},\Pi_{\coneK_2},\epsilon,\delta,k_{\max},
\eta_r\big)$.
    \STATE $y_0^{v\ell} \leftarrow \max\big\{y_0^{v\ell}, y_0^m+\rho\min\{0,\lambda_{\min}(\G(y_0^m) - \hat{\Y}_2)\}\big\}$ 
    \IF{$\|\hat{\X}\| < \epsilon$}
      \STATE $y_0^\ell \leftarrow y_0^m$ 
    \ELSE
      \STATE $y_0^u \leftarrow y_0^m$, ~~~ $y_0^\ell \leftarrow \max\{y_0^\ell, y^{v\ell}_0\}$
    \ENDIF
    \STATE $y_0^m \leftarrow (y_0^{\ell} + y_0^{u}) / 2$
  \ENDWHILE
\end{algorithmic}
\rule[4mm]{162mm}{0.5mm}\\

Here APGR Algorithm is an enhanced version of APG Algorithm which is described in Section 4.2.
One of the advantages of BP Algorithm above is that it does not require an initial finite estimation of the lower bound
 $y_0^{\ell}$. 
In fact, BP Algorithm implemented in  the current version of \matBP \ sets $y_0^m\leftarrow y_0^{u}$ 
and $y_0^\ell\leftarrow -\infty$ at the initialization step.
For a better upper bound $y_0^u$ for $y_0^*$, a heuristic method applied to the original POP can be employed. 
For the value of $\rho$, the exact theoretical value of $\rho$  can be computed 
from COP~\eqref{eq:generalCOP} in some cases; see \cite{ARIMA2014b} for example.
We also present a method to estimate $\rho$ in Section~\ref{sec:SubmoForRho}.

\section{Sparse DNN relaxation of POP~\eqref{POP0}}

\label{sec:COPandPOP}

Exploiting sparsity is a key technique for solving large-scale SDPs and 
SDP relaxations of large-scale POPs. 
For example, see~\cite{FUKUDA2003,WAKI2006,WAKI2008}. 
Throughout this section, we assume that POP~\eqref{POP0} satisfies a certain structured sparsity
described in Section 3.2, and derive 
its sparse DNN relaxation~\eqref{eq:relaxOfPOP} of the same form as 
COP~\eqref{eq:generalCOP}. 
The method to derive the DNN relaxation~\eqref{eq:relaxOfPOP} from POP~\eqref{POP0} 
can be divided into two main steps: lift POP~\eqref{POP0} to an 
equivalent problem~\eqref{eq:BBCPOPMoment} with moment matrices 
in the vector variable $\x$, and replace the moment matrices by symmetric matrix variables after 
adding valid inequalities. In the software \matBP,
the function \texttt{BBCPOPtoDNN} implements the two steps just mentioned. 
After introducing notation and symbols in Section~\ref{sec:notation}, 
we present how  the sparsity in POP~\eqref{POP0} is exploited in Section 3.2, and the 
details of the above
two steps in Section \ref{sec:POPMoment} and Section \ref{sec:ValidConstr}, respectively.

\subsection{Notation and symbols}

\label{sec:notation}

Let $\Integer^n_+$ denote the set of $n$-dimensional nonnegative integer vectors. 
For each $\x = (x_1,\ldots,x_n) \in \Real^n$ and $\balpha = (\alpha_1,\ldots,\alpha_n) \in \Integer^n_+$, let 
$\x^{\salpha} = x^{\salpha_1} \cdots x^{\salpha_n}$ denote a {\em monomial}. 
We call deg$(\x^{\salpha}) = \max\{ \alpha_i : i=1,\ldots,n \}$ the {\em degree} of a monomial $\x^{\salpha}$. 
Each polynomial $f(\x)$ is represented as $f(\x) = \sum_{\salpha \in \sFC} c_{\balpha} \x^{\salpha}$
for some nonempty finite subset $\FC$ of $\Integer^n_+$ and $c_{\balpha} \in \Real$ $(\balpha \in \FC)$. 
We call supp$f = \{ \balpha \in \FC : c(\balpha) \not= 0 \}$ the {\em support} of $f(\x)$; hence 
 $f(\x) = \sum_{\salpha \in \mbox{\small supp$f$}} c(\balpha) \x^{\salpha}$ is the minimal 
 representation of $f(\x)$. We call deg$f = \max\{ \mbox{deg} (\x^{\salpha}) : \balpha \in \mbox{supp}f \}$ 
 the {\em degree} of $f(\x)$.

Let $\AC$ be a nonempty finite subset of $\Integer_+^n$ with 
cardinality $\left| \AC \right|$, and 
let $\SymMat^{\sAC}$ denote the linear space of $\left| \AC \right| \times \left| \AC \right|$ symmetric matrices 
whose rows and columns are indexed by $\AC$. The $(\balpha,\bbeta)$th 
component of  each $\X \in \SymMat^{\sAC}$ is written as $X_{{\salpha}{\sbeta}}$ 
$((\balpha , \bbeta) \in \AC \times \AC)$. The {\em inner product} of $\X, \ \Y 
\in \SymMat^{\sAC}$ is defined by 
$\inprod{\X}{\Y} = \sum_{\salpha\in \sAC} \sum_{\sbeta \in \sAC} X_{\salpha\sbeta}Y_{\salpha\sbeta}$, and 
the {\em norm} of $\X \in \SymMat^{\sAC}$ is defined
 by $\left\| \X \right\|  = \left( \inprod{\X}{\X} \right)^{1/2}$. 
 Assuming that the elements of 
$\AC$ are enumerated in an appropriate order,
we denote a $\left| \AC \right|$-dimensional column 
 vector of monomials $\x^{\salpha}$ $(\balpha \in \AC)$ by 
$\x^{\sAC}$, and a $\left| \AC \right| \times \left| \AC \right|$ symmetric matrix $(\x^{\sAC})( \x^{\sAC})^T$ of monomials 
$\x^{\salpha+\sbeta}$ $((\balpha , \bbeta) \in \AC \times \AC)$ by $\x^{\sAC\times\sAC} \in \SymMat^{\sAC}$. 
We call $\x^{\sAC\times\sAC}$ a {\em moment matrix}. 

For a pair of subsets $\AC$ and $\BC$ of $\Integer^n_+$, let 
$\AC + \BC = \{ \balpha+\bbeta : \balpha \in \AC, \ \bbeta \in \BC \}$ denote their Minkowski sum. 
Let $\SymMat^{\sAC}_+$ denote the cone of positive semidefinite matrices in $\SymMat^{\sAC}$, 
and $\SymN^{\sAC}$ the cone of nonnegative matrices in $\SymMat^{\sAC}$. 
By construction, $\x^{\sAC\times\sAC} \in \SymMat^{\sAC}_+$ for every $\x \in \Real^n$, and 
$\x^{\sAC\times\sAC} \in \SymMat^{\sAC}_+ \cap \SymN^{\sAC}$ for every $\x \in \Real^n_+$. 

We denote the feasible region of POP \eqref{POP0} as 
\begin{eqnarray*}
  \BBC = \{\x \in \Real^n \mid x_i\in[0,1] ~ (i\in\Ibox), \quad x_j\in\{0,1\} ~ (j\in\Ibinary), \quad \x^{\bgamma}=0 ~ (\bgamma\in\Gamma)\},
\end{eqnarray*}
where $\Gamma = \bigcup_{C \in \CC} \{ \bgamma \in \{0,1\} ^n \mid  \gamma_i = 1 ~ \text{if} ~ i \in C ~ \text{and} ~ \gamma_j = 0 ~ \text{otherwise} \}$. 
Then, POP \eqref{POP0} is written as follows:
\begin{equation}
  \zeta^* = \min_{\x\in\Real^n} \{ f_0(\x) \mid \x \in \BBC \}. \label{eq:BBCPOP}
\end{equation}
We note that the POPs dealt with in \cite{KIM2016} are special cases
of \eqref{eq:BBCPOP} where $\Gamma=\emptyset$. 

Let  $\r:\Integer_+^n\to \Integer_+^n$ be defined by
\begin{equation}
  (\r(\balpha))_i = \begin{cases}
    \min\{\alpha_i, 1\} & \text{if } i\in\Ibinary \\
    \alpha_i & \text{otherwise ({\it i.e.}, $i\in\Ibox$).}
  \end{cases}\label{eq:r_function}
\end{equation}
If $\x\in \BBC$, then $\x^{\balpha}=\x^{\r(\balpha)}$ holds for all $\balpha \in \Integer_+^n$.
Hence we may replace each monomial $\x^{\balpha}$ in $f_0(\x)$ by $\x^{\r(\balpha)}$. Therefore 
$\supp f_0 = \r(\supp f_0)$ is assumed without loss of generality in the subsequent discussion.

\subsection{Exploiting sparsity}

Let $\nabla^2f_0(\x)$ denote the Hessian matrix of $f_0(\x)$. 
For POP~\eqref{eq:BBCPOP}, we introduce 
{\it the $n \times n$ sparsity pattern matrix} $\R$ whose $(i,j)$th element is defined by 
\begin{eqnarray*}
R_{ij} & = & \left\{
\begin{array}{ll}
1  &  \mbox{if }  i=j \ \mbox{or  the } (i,j)\mbox{th element of } \nabla^2f_0(\x) \ \mbox{is not identically zero}, \\
1 & \mbox{if } i,j\in C \ \mbox{for some } C \in \CC, \\ 
0 & \mbox{otherwise}. 
\end{array}
\right. 
\end{eqnarray*}
If $\CC = \emptyset$, then $\R$ represents the sparsity pattern of the Hessian matrix $\nabla^2f_0(\x)$. 

Next, we choose a family of subsets $V^k$ of $N=\{1,2,\ldots,n\}$ $(k=1,\ldots,\ell)$ 
such that the union of $V^k \times V^k$ 
$(k=1,\ldots,\ell)$ covers the set of indices $(i,j)$ associated with the nonzero elements of $\R$, i.e.,
\begin{equation}
\big\{ (i,j) \in N \times N  \mid  R_{ij} = 1 \big\} \subseteq \bigcup_{k=1}^{\ell}  V^k \times V^k. \label{eq:Vk}
\end{equation} 
Obviously, 
when the only set
 $V^1=N$ is chosen for such a family, 
 we get a dense DNN relaxation of 
POP~\eqref{eq:BBCPOP}. When $\R$ is sparse (see 
Figure~\ref{fig:spy}),  {\it the sparsity pattern graph} $G(N,\EC)$ with the node 
set $N$ and the edge set $\EC = \{(i,j) \in N \times N : i < j \ \mbox{and } R_{ij} = 1\}$ is utilized to 
create such a family $V^k$ $(k=1,\ldots,\ell)$. More precisely, let $G(N,\overline{\EC})$ be a chordal 
extension of $G(N,\EC)$, and take the maximal cliques $V^k$ $(k=1,\ldots,\ell)$ for the family. 
The chordal extension and its maximal cliques can be found by using the technique \cite{BLAIR1993} 
based on the symbolic Cholesky decomposition of the adjacency matrix of $G(N,\EC)$. 
We implicitly assume that POP~\eqref{eq:BBCPOP} satisfies the structured sparsity which induces small size 
$V^k$ with $\left|V^k\right| = O(1)$  $(k=1,\ldots,\ell)$. Figure~\ref{fig:spy} 
shows such examples. The family $V^k$ $(k=1,\ldots,\ell)$ chosen this way satisfies nice properties;
see \cite{BLAIR1993,FUKUDA2003,WAKI2006} for more details. 
Here we only mention that the number $\ell$ of maximal cliques does not exceed $n$.

As representative sparsity patterns, we illustrate the following two types: 
\begin{description}
  \item[Arrow type:] For given $\ell\geq2$, $a\geq2$, $b \in \{0,\ldots,a-1\}$, and $c\geq 1$, we set
  \begin{equation*}
    V^k = (\{(k-1)(a-b)\}+\{1, 2, \ldots a\}) \cup (\{(\ell-1)(a-b)+a\} + \{1, 2, \ldots, c\}) \ (k=1,\ldots,\ell)
  \end{equation*}
(the left picture of Figure~\ref{fig:spy}).
  \item[Chordal graph type:] Let the number $n$ of variables and the \textit{radio range} $\rho >0$ be given. 
  For $n$ points $\v_1,\v_2,\ldots,\v_n$ drawn from  a uniform distribution over the unit square $[0,1]^2$, 
  we construct the sparsity pattern graph $G(N,\EC)$ such that 
$\EC=\{(i,j) \in N\times N \mid i < j, \ \|\v_i - \v_j\|\leq \rho \}$, 
where $N=\{1,\ldots,n\}$.
  Let $V^k$ $(k=1,2,\ldots,\ell)$ be the maximal cliques in a chordal extension of $G=(N,\EC)$. 
(the right picture of Figure~\ref{fig:spy}). 
\end{description}
In Section 5, we report numerical results on randomly generated instances of binary and box constrained 
POPs with these two  types of sparsity patterns.

\begin{figure}
  \centering
  \begin{minipage}{.32\textwidth}
    \centering
    \includegraphics[width=.99\textwidth]{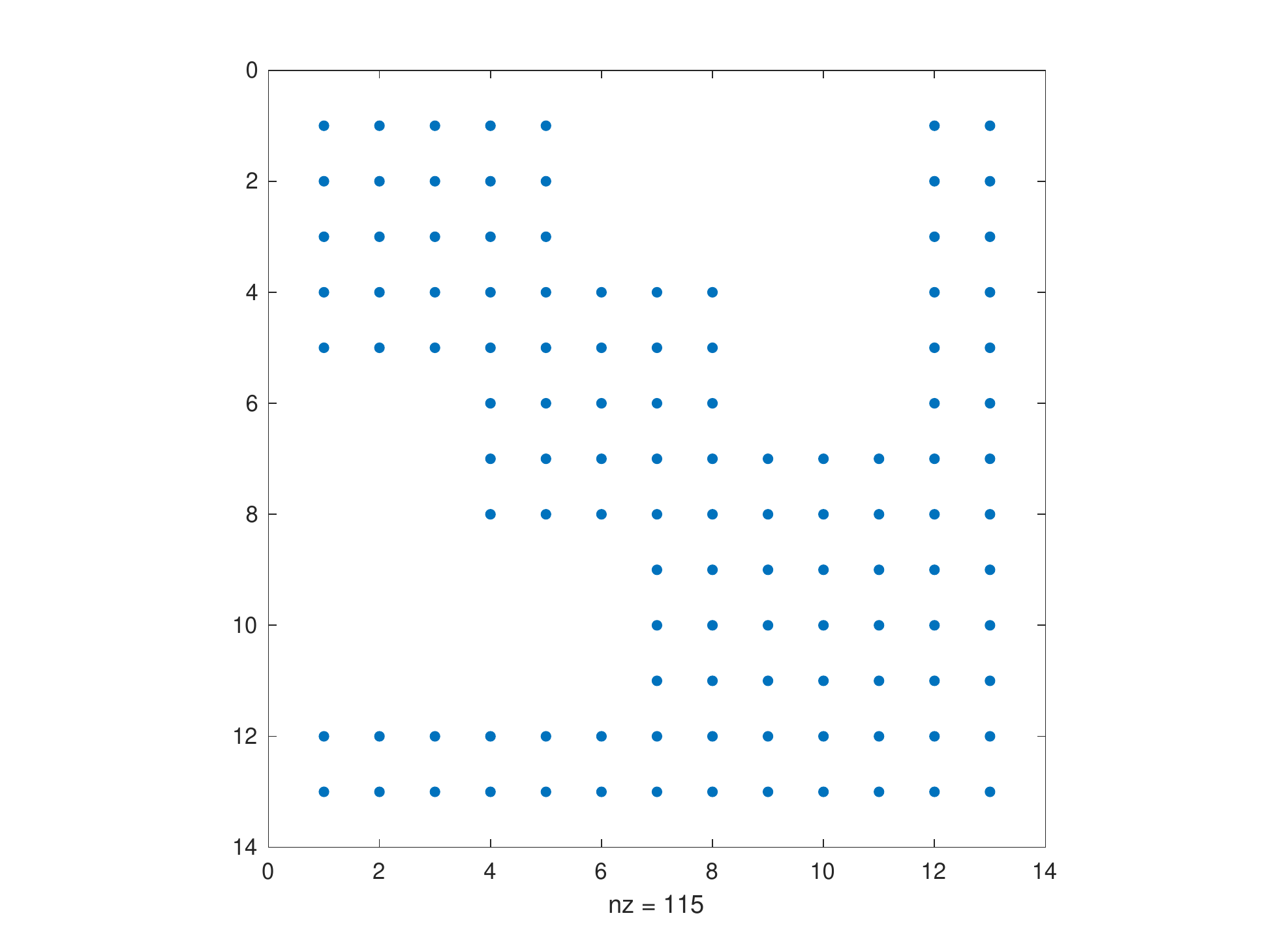}\\
    Arrow type ($(a,b,c,\ell)=(5, 2, 2, 3)$)
  \end{minipage}
  \begin{minipage}{.32\textwidth}
    \centering
    \includegraphics[width=.99\textwidth]{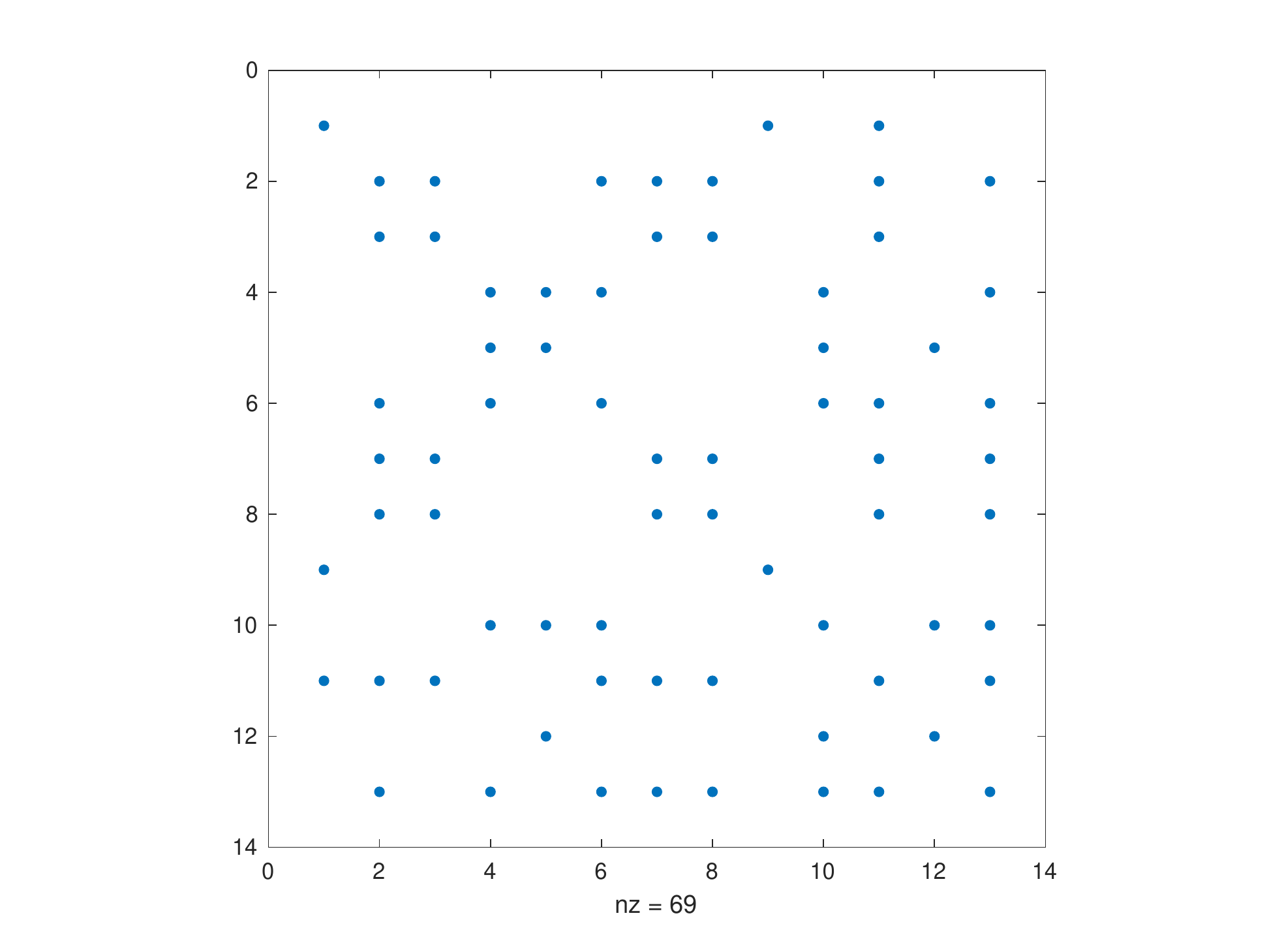}\\
    Chordal graph type \\($r=0.4$)
  \end{minipage}
  \caption{Examples of the sparsity pattern matrix $\R$ 
with $n=13$, where the dots correspond to $1$ and the blank parts to $0$'s.}\label{fig:spy}
\end{figure}

\subsection{Lifting POP~\eqref{eq:BBCPOP} with moment matrices in $\x$} \label{sec:POPMoment}

Let 
\begin{align*}
d & = \max\big\{\deg(f_0), ~ \deg(\x^{\bgamma})\ (\bgamma\in\Gamma) \big\}, \ 
\lceil{d/2}\rceil \leq \omega \in \Integer_+\\
\AC^k_\omega &=\big\{\balpha\in\Integer_+^n \mid \alpha_i=0 ~(i\not\in V^k), ~\sum_{i\in V^k}\alpha_i \leq \omega \big\} 
\ (k=1,\ldots,\ell). 
\end{align*}
Here the parameter $\omega$ is named as the \textit{relaxation order}. We then see that 
\begin{equation}
  \Gamma \subseteq \bigcup_{k=1}^\ell (\AC^k_\omega+\AC^k_\omega) \ \mbox{and } 
  \supp(f_0) \subseteq \bigcup_{k=1}^\ell (\AC^k_\omega+\AC^k_\omega). \label{eq:coverSupport}
\end{equation}
By the first inclusion relation in \eqref{eq:coverSupport}, 
each monomial $\x^{\bgamma}$ $(\bgamma \in \Gamma)$ is involved in the moment matrix $\x^{\AC^k_{\omega}\times\AC^k_{\omega}}$ for some $k \in \{1,\ldots,\ell\}$. By the second inclusion relation 
in \eqref{eq:coverSupport}, 
each monomial $\x^{\balpha}$ of the polynomial $f_0(\x)$ is involved in the moment matrix $\x^{\AC^k_{\omega}\times\AC^k_{\omega}}$ for some $k \in \{1,\ldots,\ell\}$. Hence the polynomial objective function $f_0(\x)$ can be lifted to the space 
$\spaceV = \SymMat^{\AC^1_{\omega}} \times \cdots \times \SymMat^{\AC^{\ell}_{\omega}} $ such that 
\begin{equation*}
f_0(\x) = \inprod{\F_0}{(\x^{\AC^1_{\omega}\times\AC^1_{\omega}},\ldots,\x^{\AC^{\ell}_{\omega}\times\AC^{\ell}_{\omega}})} = 
\sum_{k=1}^{\ell} \inprod{\F_0^k}{\x^{\AC^k_{\omega}\times\AC^k_{\omega}}}
\end{equation*}
for some $\F_0 = (\F_0^1,\ldots,\F_0^{\ell}) \in \spaceV$, 
where the inner product $\inprod{\A}{\B}$ for each pair of $\A=(\A^1,\ldots,\A^{\ell})$ and $\B=(\B^1,\ldots,\B^{\ell})$ in 
$\spaceV = \SymMat^{\AC_1}\times\cdots\times\SymMat^{\AC_\ell}$ is defined by 
$\inprod{\A}{\B}=\sum_{k=1}^\ell\inprod{\A^k}{\B^k}$. 
 To lift the constraint set $H\subseteq\Real^n$ 
to the space $\spaceV$, we define
\begin{align*}
M &= \{ (\x^{\AC^1_{\omega}\times\AC^1_{\omega}},\ldots,\x^{\AC^{\ell}_{\omega}\times\AC^{\ell}_{\omega}}) \in \spaceV \mid 
\x \in H \} \\ 
  &= \left\{ (\x^{\AC^1_{\omega}\times\AC^1_{\omega}},\ldots,\x^{\AC^{\ell}_{\omega}\times\AC^{\ell}_{\omega}}) \in \spaceV ~\middle|  
\begin{array}{l} 
x_i \in [0,1] \ (i \in \Ibox), \ x_i \in \{0,1\} \ (i \in \Ibinary), \\ 
\x^{\bgamma} = 0 \ (\bgamma \in \Gamma)
\end{array} 
\right\}. 
\end{align*}
By definition, $\x \in H$ if and only if 
$(\x^{\AC^1_{\omega}\times\AC^1_{\omega}},\ldots,\x^{\AC^{\ell}_{\omega}\times\AC^{\ell}_{\omega}}) \in M$. 
Thus we can lift POP  \eqref{eq:BBCPOP} to the space $\spaceV$ as follows:
\begin{equation}
  \min_{\Z} \left\{ 
  \inprod{\F_0}{\Z} ~\middle|~ \Z=(\Z^1,\Z^2,\ldots,\Z^\ell)\in \MomentBBC
  \right\}.  \label{eq:BBCPOPMoment}
\end{equation}

To illustrate the lifting procedure from POP  \eqref{eq:BBCPOP} to the space $\spaceV$, 
we consider the following example.

\begin{example}
{\rm 
\label{example:momentRepresentation}
Let us consider the following POP with $n=3$, 
$\CC=\{\{1,2\}\}$, 
$\Ibox=\{1\}$, and $\Ibinary=\{2,3\}$:
\begin{equation}
  \min_{\x\in\Real^3} 
  \left\{ 
    f_0(\x) = - x_1x_2 - x_2x_3
  ~\middle|~ 
  \begin{array}{l}
    x_1x_2 = 0, \quad x_1\in[0,1], \quad x_2,x_3\in\{0,1\}.
  \end{array}
  \right\}.\label{eq:examplePOP}
\end{equation}
Since $d = \max\{\deg(f_0), ~ \deg(\x^{\bgamma})\ (\bgamma\in\Gamma) \} = 2$, we can take $\omega=1 \geq \lceil d/2 \rceil$. 
The sparsity pattern matrix $\R$ turns out to be 
$ 
\R =  \left(\begin{smallmatrix} 1 & 1 & 0 \\ 1 & 1 & 1 \\ 0 & 1 & 1 \end{smallmatrix}\right). 
$ 
We show two different choices of $V^k$ $(k=1,\ldots,\ell)$. 

\paragraph{Dense case:}
Let $\ell=1$, $V^k=\{1,2,3\}$, and $\omega=1$. 
Then
$
\AC^1_\omega=
\left\{
\left(\begin{smallmatrix}0\\0\\0\end{smallmatrix}\right),
\left(\begin{smallmatrix}1\\0\\0\end{smallmatrix}\right),
\left(\begin{smallmatrix}0\\1\\0\end{smallmatrix}\right),
\left(\begin{smallmatrix}0\\0\\1\end{smallmatrix}\right)
\right\}
$.
We have
\[\x^{\AC^1_\omega}=\begin{pmatrix}1\\x_1\\x_2\\x_3\end{pmatrix} \quad \text{and} \quad
\x^{\AC^1_\omega\times\AC^1_\omega}=\x^{\AC^1_\omega}(\x^{\AC^1_\omega})^T =
\begin{pmatrix}
  1 & x_1 & x_2 & x_3\\
  x_1 & x_1^2 & x_1x_2 & x_1x_3\\
  x_2 & x_1x_2 & x_2^2 & x_2x_3\\
  x_3 & x_1x_3 & x_2x_3 & x_3^2
\end{pmatrix}.
\]
If we define 
\[
\F_0^1=\begin{pmatrix}
  0 & 0 & 0 & 0\\
  0 & 0 & -0.5 & -0.5\\
  0 & -0.5 & 0 & 0\\
  0 & -0.5 & 0 & 0
\end{pmatrix}, 
\]
then $f_0(\x) = \inprod{\F_0^1}{\x^{\AC^1_\omega\times\AC^1_\omega}}$ holds. 

\paragraph{Sparse case:}
Let $\ell=2$, $V^1=\{1,2\}$, $V^2=\{2,3\}$, and $\omega=1$. 
Then 
$
\AC^1_\omega=\left\{
\left(\begin{smallmatrix}0\\0\\0\end{smallmatrix}\right),
\left(\begin{smallmatrix}1\\0\\0\end{smallmatrix}\right),
\left(\begin{smallmatrix}0\\1\\0\end{smallmatrix}\right)
\right\}
$ and 
$
\AC^2_\omega=\left\{
\left(\begin{smallmatrix}0\\0\\0\end{smallmatrix}\right),
\left(\begin{smallmatrix}0\\1\\0\end{smallmatrix}\right),
\left(\begin{smallmatrix}0\\0\\1\end{smallmatrix}\right)
\right\}
$.
We have
\begin{align*}
  &\x^{\AC^1_\omega} = \begin{pmatrix}1\\x_1\\x_2\end{pmatrix}, 
  &&\x^{\AC^2_\omega} = \begin{pmatrix}1\\x_2\\x_3\end{pmatrix},\\
  &\x^{\AC^1_\omega\times\AC^1_\omega} = 
  \begin{pmatrix}
  1 & x_1 & x_2\\
  x_1 & x_1^2 & x_1x_2\\
  x_2 & x_1x_2 & x_2^2
  \end{pmatrix},
  &&\x^{\AC^2_\omega\times\AC^2_\omega} = 
  \begin{pmatrix}
  1 & x_2 & x_3\\
  x_2 & x_2^2 & x_2x_3\\
  x_3 & x_2x_3 & x_3^2
  \end{pmatrix}.
\end{align*}
If we define 
\begin{align*}
  &\F_0= (\F_0^1, \F_0^2) = 
  \begin{pmatrix}
    \begin{pmatrix}
      0 & 0 & 0\\
      0 & 0 & -0.5\\
      0 & -0.5 & 0
    \end{pmatrix},~
    \begin{pmatrix}
      0 & 0 & 0\\
      0 & 0 & -0.5\\
      0 & -0.5 & 0
    \end{pmatrix}
  \end{pmatrix},
\end{align*}
then $f_0(\x) = \sum_{k=1}^\ell \inprod{\F_0^k}{\x^{\AC^k_\omega\times\AC^k_\omega}}$ holds.
}
\end{example}

\subsection{Valid constraints and conic relaxations of POPs} \label{sec:ValidConstr}

Note that the objective function of the lifted minimization problem~\eqref{eq:BBCPOPMoment} is linear 
so that it is equivalent to the minimization of the same objective function over the convex hull of the feasible region 
$M$ of~\eqref{eq:BBCPOPMoment}.   However, the resulting convex minimization problem as well as the 
original problem~\eqref{eq:BBCPOPMoment} are numerically intractable. 
The second step for deriving a DNN relaxation from the POP \eqref{eq:BBCPOP} is to relax
the nonconvex feasible region $\MomentBBC$ to a numerically tractable convex set, 
which is represented as the intersection of the hyperplane of the form 
$\{ \Z \in \spaceV \mid \inprod{\H_0}{\Z} = 1\}$ 
and two convex cones $\coneK_1$ and $\coneK_2$ in $\spaceV$. Hence we  obtain a COP 
of the form~\eqref{eq:generalCOP}, which serves a DNN relaxation of POP~\eqref{eq:BBCPOP}. 

By definition, $(\0,\0) \in \AC^k_{\omega} \times \AC^k_{\omega}$ $(k=1,\ldots,\ell)$, which implies that
$(\x^{\AC^k_\omega \times \AC^k_\omega})_{\0\0} = 1$. Thus,  if $\H_0 = (\H_0^1,\ldots,\H_0^\ell)\in \spaceV$ 
is defined such that 
\begin{equation*}
  (\H_0^k)_{\balpha\bbeta} = \begin{cases}
    1/\ell  & \text{if } \balpha = \bbeta = \0\\
    0 & \text{otherwise,}
  \end{cases}
\end{equation*}
then the hyperplane $\{ \Z \in \spaceV \mid \inprod{\H_0}{\Z} = 1\}$  will contain $M$.
 We also know 
$M \subseteq \SymMat^{\AC^1_\omega}_+\times\cdots\times\SymMat^{\AC^\ell_\omega}_+$. As a result,
we can take 
$\coneK_1 = \SymMat^{\AC^1_\omega}_+\times\cdots\times\SymMat^{\AC^\ell_\omega}_+$. 

To construct the polyhedral cone $\coneK_2$, we consider the following valid equalities and inequalities 
for $M$: 
\begin{align}
& (\x^{\AC^k_\omega \times \AC^k_\omega})_{\balpha\bbeta}  \geq 0, \nonumber \\ 
&(\x^{\AC^k_\omega \times \AC^k_\omega})_{\balpha\bbeta} = (\x^{\AC^{k'}_\omega \times \AC^{k'}_\omega})_{\balpha'\bbeta'} & &
\mbox{if } 
\r(\balpha+\bbeta) = \r(\balpha'+\bbeta'), \nonumber \\
& (\x^{\AC^k_\omega \times \AC^k_\omega})_{\balpha\bbeta} \geq (\x^{\AC^{k'}_\omega \times \AC^{k'}_\omega})_{\balpha'\bbeta'} 
& &\mbox{if } 
\r(c(\balpha+\bbeta))=\r(\balpha'+\bbeta') \mbox{ for some } \ c \geq 1,  \label{eq:box} \\
&(\x^{\AC^k_\omega \times \AC^k_\omega})_{\balpha\bbeta} = 0 & &  \mbox{if } \r(\balpha+\bbeta)\geq\bgamma \ \mbox{for some } 
 \bgamma \in\Gamma, \nonumber 
\end{align}
$(k, k' \in \{1,\ldots,\ell\}, \ \balpha,\bbeta \in \AC^k_{\omega}, \
\balpha',\bbeta' \in \AC^{k'}_{\omega})$. 
In the above, the first inequality follows from the fact that $\x \geq \0$, the second and third inequalities follow
 from the definition of 
$\r : \Integer^n_+ \rightarrow \Integer^n_+$ and $\x \in [0,1]^n$, and the last equality from the complementarity 
condition $\x^{\bgamma} = 0$ $(\bgamma \in \Gamma)$.  Now, by linearizing the equalities and 
inequalities above, i.e., replacing $(\x^{\AC^1_\omega \times \AC^1_\omega},\ldots,
\x^{\AC^{\ell}_\omega \times \AC^{\ell}_\omega})$ by an independent 
variable 
$\Z = (\Z^1,\ldots,\Z^{\ell}) \in \spaceV$, we obtain the linear equalities and inequalities to describe 
the cone $\coneK_2$ such that 
\begin{equation*}
  \coneK_2 = 
  \left\{
      \Z=(\Z^1,\ldots,\Z^\ell) 
      \in \spaceV 
  ~\middle|~ 
    \begin{array}{l}
    \begin{array}{ll}
      \Z^k_{\balpha\bbeta} \geq 0 &  \mbox{(nonnegativity)} \\[3pt]
      \Z^k_{\balpha\bbeta} = \Z^{k'}_{\balpha'\bbeta'} & \mbox{if } \r(\balpha+\bbeta) = \r(\balpha'+\bbeta'),\\[3pt]
      \Z^k_{\balpha\bbeta} \geq \Z^{k'}_{\balpha'\bbeta'} &  
           \mbox{if } \r(c(\balpha+\bbeta)) = \r(\balpha'+\bbeta') \\
         & \mbox{for some } c \geq 1, \\[3pt]
      \Z^k_{\balpha\bbeta} = 0 & \mbox{if } 
          \r(\balpha+\bbeta)\geq\bgamma \ \mbox{for some $\gamma \in \Gamma$} \\[3pt]
   \end{array}\\
      (k, k' \in \{1,\ldots,\ell\}, \ \balpha,\bbeta \in \AC^k_{\omega}, \ \balpha',\bbeta' \in \AC^{k'}_{\omega})
    \end{array}
  \right\}.
\end{equation*}
Consequently, we obtain the following COP:
\begin{equation}
  \zeta = \min_{\Z} \left\{ 
  \inprod{\F_0}{\Z}
  ~\middle|~ 
    \inprod{\H_0}{\Z} = 1, ~~ \Z \in \coneK_1 \cap \coneK_2
  \right\}, \label{eq:relaxOfPOP}
\end{equation}
which serves as a DNN relaxation of POP~\eqref{eq:BBCPOP}. 

Note that COP \eqref{eq:relaxOfPOP} is exactly in the form of \eqref{eq:generalCOP},
to which BP Algorithm can be applied. 
In APG Algorithm, which is called within  BP Algorithm, 
the metric projection $\Pi_{\coneK_1}(\Z)$ of $\Z=(\Z^1,\ldots,\Z^\ell) \in \spaceV$ onto 
$\coneK_1=\SymMat^{\AC^1_\omega}_+\times\cdots\SymMat^{\AC^\ell_\omega}_+$ 
can be computed by the eigenvalue decomposition of $\Z^k\in\SymMat^{\AC^k_\omega}$ $(k=1,\ldots,\ell)$, 
and the metric projection $\Pi_{\coneK_2}(\Z)$ of $\Z=(\Z^1,\ldots,\Z^\ell) \in \spaceV$ onto 
$\coneK_2$ can be computed efficiently by Algorithm 3.3 of \cite{KIM2016}. 
As a result, COP~\eqref{eq:relaxOfPOP} can be solved efficiently by BP Algorithm. 
Note that the eigenvalue decomposition of $\Z^k\in\SymMat^{\AC^k_\omega}$, which requires 
$O(\left|\AC^k_\omega\right|^3)$ arithmetic operations at each iteration of APG Algorithm, is the most 
time consuming part in BP Algorithm applied to COP~\eqref{eq:relaxOfPOP}. 
Therefore, it is crucial for the
computational efficiency of BP Algorithm to choose smaller $V^k$ $(k=1,\ldots,\ell)$, which 
determines the size $\left|\AC^k_\omega\right|$ of $\AC^k_\omega$ 
$(k=1,\ldots,\ell)$,  by 
exploiting the sparsity of POP~\eqref{eq:BBCPOP} as  presented in Section 3.2.
We also note that the primal-dual interior-point method for COP~\eqref{eq:relaxOfPOP} would remain 
computationally expensive 
since $\coneK_2$ consists of a large number of inequality such as nonnegative constraints. 

Given a POP of the form~\eqref{eq:BBCPOP}, the sparsity pattern matrix $\R$ 
and the graph $G(N,\EC)$ are uniquely determined.  However, the choice of the family $V^k$ $(k=1,\ldots,\ell)$ 
satisfying~\eqref{eq:Vk} is not unique. 
Recall that $\F_0$, $\H_0$, $\coneK_1$ and $\coneK_2$  in COP~\eqref{eq:relaxOfPOP} 
depends on the relaxation order $\omega$. Theoretically, the optimal value $\zeta$ of 
COP~\eqref{eq:relaxOfPOP} is monotonically nondecreasing with respect to $\omega \geq \lceil d/2 \rceil$.
Thus, a tighter lower bound for the optimal value $\zeta^*$ of POP~\eqref{eq:BBCPOP} can be expected 
when a larger  $\omega$ is used. However, the
numerical cost of solving COP~\eqref{eq:relaxOfPOP} by BP Algorithm increases very rapidly 
as $\omega$ increases. 
We should mention that
the family of COP \eqref{eq:relaxOfPOP} with increasing  $\omega \geq \lceil d/2 \rceil$ 
forms a hierarchy of DNN relaxations for POP \eqref{eq:BBCPOP}. Specifically, if $\Ibox=\emptyset$, our hierarchy 
may be regarded 
as a variant of the sparse SDP relaxation proposed in \cite{WAKI2006,SparsePOP_UG} 
if the SDP is replaced by a stronger DNN relaxation
at each hierarchy level, although their formulations may look quite different. Therefore, if $\Ibox=\emptyset$, the convergence of the optimal value $\zeta$ of  COP \eqref{eq:relaxOfPOP} to the optimal value 
$\zeta^*$ of POP \eqref{eq:BBCPOP} is guaranteed \cite{LASSERRE2001b,LASSERRE2006}
as the relaxation order $\omega$ increases to infinity.

We also mention that COP \eqref{eq:relaxOfPOP} can be strengthened by replacing the condition 
``if $\r(c(\balpha+\bbeta)) = \r(\balpha'+\bbeta')$ for some $c\geq 1$'' with the condition 
``if $\r(\balpha+\bbeta) \leq \r(\balpha'+\bbeta')$'' in \eqref{eq:box} and in the description of $\coneK_2$. 
But we should take note that
 this replacement considerably increases the number of inequalities in $\coneK_2$ and
makes the computation of the metric projection $\Pi_{\coneK_2}(\cdot)$ to be very complicated and expensive.
As a result, 
APG Algorithm (hence BP Algorithm too)  is not expected to be efficient for the  COP with the
strengthened $\coneK_2$; see \cite{Ito2018a} for the details.

\section{Improving the solution quality} 

\subsection{An upper bound $\rho$ of the trace of the moment matrix} \label{sec:SubmoForRho}

Under the assumptions (A1) and (A2),  a valid lower bound $y_0^{v\ell}$ 
for the optimal value $y_0^*$ of COP \eqref{eq:generalCOP} is obtained  from 
BP Algorithm. 
The quality of the valid lower bound $y_0^{v\ell}$ 
depends noticeably  on the choice 
of $\rho > 0$ satisfying (A1) where a smaller $\rho$ will lead to a larger $y_0^{v\ell}$. In this section, we discuss 
the problem of minimizing $\rho$ among the ones that satisfy (A1). 

Throughout this section, we assume that 
COP \eqref{eq:generalCOP} is constructed for a conic relaxation of POP \eqref{eq:BBCPOP} 
as  described in Section 3. Thus, $y_0^{v\ell}$ serves as a valid lower bound of  the POP.
In this case, $\coneK_1$ turns out to be $\SymMat_+^{\AC^1_\omega}\times\SymMat_+^{\AC^2_\omega}\times\ldots\times\SymMat_+^{\AC^\ell_\omega}$. 
As a result,  $\I = (\I_1,\ldots,\I_{\ell})$ can be taken 
as an interior point of $\coneK_1$, where $\I_k$ denotes the identity matrix in $\SymMat^{\AC^k_\omega}$. 
The problem under consideration is written as 
\begin{align}
\hat{\rho} 
    & = \max \left\{\inprod{\I}{\Z} \mid \Z \ \text{ is a feasible solution of COP \eqref{eq:generalCOP}} \right\} \nonumber \\
    & = \max \left\{\sum_{k=1}^{\ell}\inprod{\I_k}{\Z_k} \Big| \inprod{\H}{\Z}=1, \ \Z = (\Z_1,\ldots,\Z_{\ell})\in\coneK_1\cap\coneK_2 \right\}. 
\label{eq:COPrho}
\end{align}
We may regard
the problem \eqref{eq:COPrho} 
as a DNN relaxation of the following POP:
\begin{align}
\rho^* 
  & = \max \left\{\sum_{k=1}^{\ell}\inprod{\I_k}{\x^{ \AC^k_\omega\times \AC^k_\omega}} \Big| \x \in H \ \text{(\textit{i.e.}, a feasible solution of POP \eqref{eq:BBCPOP})} \right\} \nonumber \\
  & = \max \left\{
    \sum_{k=1}^{\ell} \sum_{\balpha\in\AC^k_\omega}\x^{\balpha+\balpha}
  \Big| 
    \begin{array}{l}
      x_i \in [0,1] \ (i\in\Ibox), \ x_j \in \{0,1\} \ (j \in\Ibinary), \\
      \x^{\bgamma} = 0 \ (\bgamma \in \Gamma)
    \end{array}
  \right\}.
\label{eq:POPrho} 
\end{align} 
This implies that $\hat{\rho} \geq \rho^*$, and  if $\rho \geq \rho^*$ and $(y_0,\Y_2,\mu)$ 
is a feasible solution of COP 
\eqref{eq:generalDualCOPIX}, then $y_0 + \rho\mu$ provides a valid lower bound  for 
the optimal value of POP \eqref{eq:BBCPOP}. Thus, it is more reasonable to consider 
\eqref{eq:POPrho} directly  than its relaxation \eqref{eq:COPrho}. 
It is easy to verify that the problem \eqref{eq:POPrho} has an optimal solution $\x$ 
with $x_i \in \{0, 1\}$ 
for all $i=1,.\ldots,n$. In addition, if $\balpha \geq \bgamma$ for some $\bgamma \in \Gamma$ 
then $\x^{\balpha+\balpha}=0$. 
Hence, \eqref{eq:POPrho} is reduced to a combinatorial optimization problem given by
\begin{eqnarray}
\rho^* & = & \displaystyle \max \left\{ \sum_{k=1}^{\ell} \sum_{\balpha \in \BC_{\omega}^k} \x^{\balpha+\balpha} 
\Big| \x \in \{0,1\}^n, \ \x^{\bgamma} = 0 \ (\bgamma \in \Gamma) 
\right\},\label{eq:POPrho2}
\end{eqnarray}
where $\BC_{\omega}^k = \{ \balpha \in \AC_{\omega}^k \mid \balpha  \not\geq \bgamma \ \mbox{for any } \bgamma \in \Gamma \}$
 $(k=1,\ldots,\ell)$. 
It would be ideal  to use $\rho = \rho^*$ in BP Algorithm for a tight lower bound 
$y_0^{v\ell}$.  As  the problem \eqref{eq:POPrho2} is  numerically intractable in general,
any upper bound $\rho$ for $\rho^*$ can be used in practice. In particular,  the trivial upper bound 
$\rho = \sum_{k=1}^{\ell} \left|\BC_{\omega}^k\right|$ may be used for an upper bound of $\rho^*$, 
but it may not be tight  except for simple cases.

We can further reduce the problem \eqref{eq:POPrho2} to a submodular function minimization under
a set cover constraint for which efficient approximation algorithms \cite{Iwata2009,Wan2010}  
exist for a tight lower bound of the 
its minimum value.
For this purpose,  the vector variable $\x \in \{0, 1\}^n$ is replaced by a set variable 
$S_0 \subseteq N = \{1,\ldots,n\}$, which determines $x_i$ to be $0$ if $i \in S_0$ and $1$ otherwise. 
For every $i\in N$, define
$$
  E_i = \bigcup_{k=1}^{\ell} \{(k,\balpha) \mid \balpha \in \BC^k_\omega, ~\alpha_i\geq 1\}, \quad
  F_i = \{\bgamma \in \Gamma \mid \gamma_i \geq 1\}. 
$$
The next Lemma states two properties of the above sets.

\begin{lemma} \label{lemma:submodular} 
Choose $\x \in \{0,1\}^n$ arbitrarily. Let $S_0 = \{i\in N \mid x_i = 0\}$. Then we have that
\begin{description}
\item{(i) } $\bigcup_{i\in S_0} E_i=\bigcup_{k=1}^{\ell}\{(k,\balpha)\mid\balpha\in\BC^k_\omega, ~ \x^{\balpha}= 0\}$;
\item{(ii) } 
$\x^{\bgamma} = 0$ $(\bgamma \in \Gamma)$ if and only if 
$\bigcup_{i\in S_0} F_i = \Gamma$ or $\Big|\bigcup_{i\in S_0} F_i\Big| = | \Gamma |$. 
\end{description}
\end{lemma}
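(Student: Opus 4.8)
The plan is to reduce both statements to a single elementary observation about binary vectors: for any $\x\in\{0,1\}^n$ and any exponent $\balpha\in\Integer_+^n$, the monomial $\x^{\balpha}=\prod_{i=1}^n x_i^{\alpha_i}$ vanishes if and only if there is an index $i$ with $x_i=0$ and $\alpha_i\geq1$, i.e. some $i\in S_0$ with $\alpha_i\geq1$. This is because each factor $x_i^{\alpha_i}$ equals $0$ exactly when $x_i=0$ and $\alpha_i\geq1$, and equals $1$ otherwise (either $x_i=1$ or $\alpha_i=0$). I would record this equivalence first, since it is the engine behind both parts.

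For part (i), I would interchange the two unions on the left-hand side to obtain
\[
\bigcup_{i\in S_0}E_i=\bigcup_{k=1}^{\ell}\bigl\{(k,\balpha)\mid\balpha\in\BC^k_\omega,\ \exists\, i\in S_0 \text{ with }\alpha_i\geq1\bigr\}.
\]
Applying the binary observation termwise, the inner condition ``$\exists\, i\in S_0$ with $\alpha_i\geq1$'' is precisely ``$\x^{\balpha}=0$'', which identifies the right-hand side $\bigcup_{k=1}^{\ell}\{(k,\balpha)\mid\balpha\in\BC^k_\omega,\ \x^{\balpha}=0\}$ at once.

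For part (ii), I would first note that every $\bgamma\in\Gamma$ is the $0/1$ indicator of some $C\in\CC$, so $\x^{\bgamma}=\prod_{i\in C}x_i$; by the binary observation, $\x^{\bgamma}=0$ if and only if $\gamma_i\geq1$ for some $i\in S_0$, that is, $\bgamma\in\bigcup_{i\in S_0}F_i$. Hence $\x^{\bgamma}=0$ for every $\bgamma\in\Gamma$ exactly when $\Gamma\subseteq\bigcup_{i\in S_0}F_i$. Since each $F_i\subseteq\Gamma$ gives the reverse inclusion for free, this is equivalent to the set equality $\bigcup_{i\in S_0}F_i=\Gamma$; and because $\Gamma$ is finite and the union is a subset of it, set equality is in turn equivalent to the cardinality equality $\bigl|\bigcup_{i\in S_0}F_i\bigr|=|\Gamma|$, which is the stated alternative form.

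I do not anticipate a genuine obstacle here. The only points requiring a moment's care are that the termwise argument uses the hypothesis $\x\in\{0,1\}^n$ (which is given); that the defining restriction ``$\balpha\not\geq\bgamma$ for all $\bgamma\in\Gamma$'' in $\BC^k_\omega$ plays no role in the vanishing criterion and merely fixes the common index set on both sides of (i); and that in (ii) one must invoke the automatic inclusion $\bigcup_{i\in S_0}F_i\subseteq\Gamma$ to upgrade $\Gamma\subseteq\bigcup_{i\in S_0}F_i$ to an equality before passing to cardinalities.
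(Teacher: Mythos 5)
Your proof is correct and follows essentially the same route as the paper's: both arguments rest on the observation that for $\x\in\{0,1\}^n$ a monomial $\x^{\balpha}$ vanishes exactly when some $i\in S_0$ has $\alpha_i\geq 1$, applied once to the sets $E_i$ for (i) and once to the sets $F_i$ for (ii). Your version merely packages this observation as a single upfront lemma and handles the cardinality reformulation in (ii) slightly more explicitly than the paper does; the substance is identical.
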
 
\begin{proof}
(i) Assume that $(k,\balpha) \in E_i$ for some $i \in S_0$. By definition, $(k,\balpha) \in \BC^k_{\omega}$ and 
$\alpha_i \geq 1$. Hence $0=x_i=x_i^{\alpha_i} = \x^{\balpha}$. Thus we have shown the inclusion 
$\bigcup_{i\in S_0} E_i \subseteq \bigcup_{k=1}^{\ell}\{(k,\balpha)\mid\balpha\in\BC^k_\omega, ~ \x^{\balpha}= 0\}$. 
Now assume that $\balpha\in\BC^k_\omega$ and $\x^{\balpha}= 0$. It follows from $\x^{\balpha}= 0$ and 
$\x \in \{0,1\}^n$ that $\alpha_i \geq 1$ and $x_i = 0$ for some $i \in N$. 
Hence $i \in S_0$ and $(k,\balpha)\in\bigcup_{i\in S_0} E_i$. Thus we have shown the converse inclusion. \\
\mbox{ \  } \hspace{5mm} (ii) 
Assume that $\x^{\bgamma} = 0$ $(\bgamma \in \Gamma)$. 
The inclusion $\bigcup_{i\in S_0} F_i \subseteq \Gamma$ is straightforward by definition. If $\bgamma \in \Gamma$, 
then it follows from $\x^{\bgamma} = 0$ that $\gamma_i \geq 1$ and $x_i = 0$ for some $i \in N$; 
hence $i \in S_0$ and $\bgamma \in F_i$. Thus we have shown the converse inclusion, and 
the ``only if'' part of (ii).   Now 
assume that $\bigcup_{i\in S_0} F_i = \Gamma$. Let $\bgamma \in \Gamma$. Then there is an $i \in S_0$ such that 
$\bgamma \in F_i$: Hence $x_i = 0$ and $\gamma_i \geq 1$, which implies that $0=x_i = x_i^{\gamma_i} = \x^{\bgamma}$.  
Thus we have shown the ``if'' part of (ii). 
\end{proof}

By (ii) of Lemma~\ref{lemma:submodular}, we can rewrite the constraint of the problem \eqref{eq:POPrho2}  
as $S_0 \subseteq N$ 
and $\Big|\bigcup_{i\in S_0} F_i\Big| = | \Gamma |$, and the objective function as
\begin{eqnarray*}
\sum_{k=1}^{\ell} \sum_{\balpha\in\BC^k_\omega}\x^{\balpha+\balpha} & = & 
\sum_{k=1}^{\ell} \sum_{\balpha\in\BC^k_\omega} \sum_{\x^{\salpha }= 1} 1 
\ \mbox{(since $\x^{\balpha} = 0$ or $1$ for every $\balpha \in \BC^k_{\omega}$)}\\ 
& = & \sum_{k=1}^{\ell} \left( \left| \BC^k_{\omega} \right| 
- \sum_{\balpha\in\BC^k_\omega} \sum_{\x^{\salpha }= 0} 1 \right) 
 \\ 
& = & \sum_{k=1}^{\ell} \left| \BC^k_{\omega} \right| - \sum_{k=1}^{\ell}\sum_{\balpha\in\BC^k_\omega} \sum_{\x^{\salpha }= 0} 1 \\
& = &  \sum_{k=1}^{\ell} \left| \BC^k_{\omega} \right|  -  \left|\bigcup_{k=1}^{\ell}\{(k,\balpha)\mid\balpha\in\BC^k_\omega, ~ \x^{\balpha}= 0\}\right| \\ 
& = &  \sum_{k=1}^{\ell} \left| \BC^k_{\omega} \right|  - \left| \bigcup_{i\in S_0} E_i\right| \ \mbox{(by (i) of Lemma~\ref{lemma:submodular})}. 
\end{eqnarray*}
Therefore the problem 
\eqref{eq:POPrho2} is equivalent to the problem
\begin{equation}
    \min \{c(S_0) \mid S_0\subseteq N, \ f(S_0) = |\Gamma|\}, \label{eq:max_trace_submo}
\end{equation}
where $c$ and $f$ are \textit{submodular} functions defined by 
\begin{eqnarray*}
  c(S_0) = \Big|\bigcup_{i\in S_0} E_i\Big| \quad \text{and} \quad f(S_0) = \Big|\bigcup_{i\in S_0} F_i\Big| \ 
\mbox{for every } S_0 \subseteq N.
\end{eqnarray*}
This problem is known as a submodular minimization problem under a submodular cover constraint. 
Approximation algorithms \cite{Iwata2009,Wan2010} can be used to obtain a lower bound $\bar{c} \geq 0$ 
for the optimal value of \eqref{eq:max_trace_submo}. By construction, $\rho = \sum_{k=1}^{\ell} \left| \BC_{\omega}^k\right|-\bar{c}$  
provides an upper bound of \eqref{eq:POPrho2}, 
which is tighter than or equals to  $\sum_{k=1}^{\ell} \left|\BC_{\omega}^k\right|$. See \cite{Ito2018a} for the details.

\subsection{Enhancing APG Algorithm}

Although APG Algorithm has the strong theoretical complexity result such that $f(\Y^k_2) - f^* \leq O(1/k^2)$, 
in this subsection, we will propose some enhancements to improve its practical performance.
 We begin by noting that 
the term $\frac{(t_k) - 1}{t_{k+1}} (\Y_1^{k} - \Y_1^{k-1})$ in the substitution 
$\overline{\Y}_1^{k+1} \leftarrow \Y_1^{k} + \frac{(t_k) - 1}{t_{k+1}} (\Y_1^{k} - \Y_1^{k-1})$   
can be seen as the 
momentum of the sequence, and the monotonically increasing sequence $\{\frac{(t_k) - 1}{t_{k+1}}\}_{k=0}^\infty\subseteq [0,1)$ determines the amount of the momentum. 
When the momentum is high, the sequence $\{\Y_1^k\}_{k=0}^\infty$ would overshoot and oscillate around the optimal solution. 
In order to avoid such an oscillation to further speed up the convergence, 
we incorporate the adaptive restarting technique \cite{Candes2015} that resets the momentum back to zero ($t_k \leftarrow 1$) and takes a step back to the previous point $\Y_1^{k-1}$ when the objective value increases, i.e., $\|\X^{k}\|-\|\X^{k-1}\| > 0$. 
In order to avoid frequent restarts, we also employ the technique of \cite{Monteiro2016,Ito2017a} that
prohibits the next restart for $K_i$ iterations after the $i$th restart has occurred, where $K_0\geq 2$ 
and $K_i=2K_{i-1}$ $(i=1,2,\ldots)$. More precisely, we modify APG Algorithm to the following APGR Algorithm.

\medskip

\noindent
\rule[0mm]{162mm}{0.5mm}\\
{\bf APGR Algorithm} (Accelerated Proximal Gradient algorithm with restarting for feasibility test)\\
\rule[2mm]{162mm}{0.2mm}\vspace{-3mm}
  \begin{algorithmic}
    \STATE Input: $\G \in \XC$, $\Y_1^0 \in \XC$, $\Pi_{\coneK_1}$, $\Pi_{\coneK_2}$, $\epsilon>0$, $\delta>0$, $k_{max}>0$, $\eta_r>1$ 
    \STATE Output: $(\X^{k},\Y_1^{k},\Y_2^{k})$
    \STATE Initialize: $t_1 \leftarrow 1, ~L_1 \leftarrow 0.8, ~\overline{\Y}_1^{0} \leftarrow \Y_1^0$, $K_0=2$, $i=1$, $k_{re} = 0$ 
    \FOR{$k = 1,\dots,k_{max}$}
      \STATE $\Y_1^k \leftarrow \Pi_{\coneK_1^*}\Big(\overline{\Y}_1^k - \frac{1}{L_k}\Pi_{\coneK_2}(\overline{\Y}_1^k - \G)\Big)$  
      \STATE $\Y_2^{k} \leftarrow \Pi_{\coneK_2^*}(\G - \Y_1^{k})$, ~ $\X^{k} \leftarrow \G - \Y_1^{k} - \Y_2^{k}$ 
      \IF{ $\|\X^{k}\|<\epsilon$ or $g(\X^k,\Y_1^k,\Y_2^k) < \delta $  }
        \STATE \textbf{break}
      \ENDIF
      \STATE $t_{k+1}  \leftarrow  \frac{1+\sqrt{1+4t_k^2}}{2}$  
      \STATE $\overline{\Y}_1^{k+1} \leftarrow \Y_1^{k} + \frac{(t_k) - 1}{t_{k+1}} (\Y_1^{k} - \Y_1^{k-1})$  
      \IF{$\|\X^{k}\|-\|\X^{k-1}\|>0$ and $k>K_i+k_{re}$}
        \STATE $t^{k+1} \leftarrow  1$, $\overline{\Y}_1^{k+1} \leftarrow  \Y_1^k$, $k_{re} \leftarrow  k$ 
        \STATE $K_{i+1} \leftarrow  2K_i$, $i\leftarrow i+1$ 
        \STATE $L_{k+1}\leftarrow \eta_r L_k$ 
      \ELSE
        \STATE $L_{k+1}\leftarrow L_k$ 
      \ENDIF
    \ENDFOR
  \end{algorithmic}
\rule[4mm]{162mm}{0.5mm}\\

Recall that $L=1$ is a Lipschitiz constant for the gradient 
of the function $f$ defined in \eqref{eq:regressionFISTA}.
For APGR Algorithm, the convergence complexity result of
$f(\Y_1^k)-f^* \leq O((\log k/ k)^2)$ is ensured by \cite{Ito2017a,Ito2018a}. 
Although it is slightly worse than the original theoretical convergence guarantee  of $O(1/k^2)$, 
it often converges much faster in practice.
To improve the practical performance of APGR Algorithm, 
the initial  estimate $L_1$ of $L$ in the algorithm is set to a value less than $1$. 
For many instances,  $L_1 =0.8$ provided good results.

Even with the aforementioned practical improvements,  APGR Algorithm can still take a long time 
to compute
 a very accurate solution of the problem \eqref{eq:regressionFISTA1cone} 
 for the purpose of deciding whether $f^*=0$. 
If there exists sufficient evidence to show 
that the optimum value $f^*$ is not likely to be $0$, then 
APGR Algorithm can be terminated earlier to save computation time.  
To be precise, next
we discuss the stopping criteria of APGR Algorithm.
Let $g^k$ denote the violation $g(\X^k,\Y_1^k,\Y_2^k)$ of the KKT condition. 
Assume that $g^k$ is sufficiently small, i.e., the solution is nearly optimal. 
Then the ratio $\|\X^k\|/g^k$ of the optimal value and the KKT violation
is a reasonable measure to indicate
how far away $f^*$ is from 0. 
To determine that the value $\|\X^k\|/g^k$ will not be improved much,
 the following values are computed and tested at every  $k$th iteration ($k>30$):
\begin{itemize}
  \item $M_g^k=$ Geometric mean of $\{g^{k-i}/g^{k-i-30}\mid i=0,1,\ldots,9\}$. 
  \item $M_X^k=$ Geometric mean of $\{\|\X^{k-i}\|/\|\X^{k-i-30}\|\mid i=0,1,\ldots,9\}$.
\end{itemize}
If the above values are close to $1$,
much improvement in the KKT residual and objective values (and $\|\X^k\|/g^k$) cannot be expected.
Based on these observations, we implemented several heuristic stopping criteria in 
our practical implementation of APGR Algorithm in the {\sc Matlab} function called
\texttt{projK1K2\_fista\_dualK.m}. 
For example, we have found the following stopping criterion
\[  \|\X^k\|/g^k\geq 10^4, \quad g^k\leq \sqrt{\delta}, \quad M_g^k \geq 0.95, \quad M_X^k \geq 0.995\]
is useful to reduce the computational time substantially without losing the 
quality of our computed valid lower bound $y^{vl}_0$ from BP Algorithm.

\section{Numerical experiments} \label{sec:expBP}

We present numerical results to compare
the performance of \matBP \ with SDPNAL+ \cite{YST2015}, 
a state-of-the-art solver for large scale semidefinite optimization problem with nonnegative constraints. 
Recall that \matBP \ is a MATLAB implementation of the DNN relaxation~\eqref{eq:relaxOfPOP} of POP~\eqref{eq:BBCPOP} 
based on BP and APGR Algorithms, which have been presented in Sections 2.3 and 4.2, respectively.
SDPNAL+ is an augmented Lagrangian based method for which the main 
subproblem in each iteration is solved by a  semismooth Newton method that employs the 
generalized Hessian of the underlying function. As already mentioned in the Introduction, the solver is able to 
handle nondegenerate problems efficiently but it is usually not efficient in solving degenerate problems and generally
unable to solve such problems to high accuracy.
The solver \matBP \ on the other hand,  uses only  first-order derivative information and is specifically
designed to handle degenerate problems arising from the DNN relaxations of BBC constrained POPs. As a result,
SDPNAL+ is expected to provide a more accurate solution than \matBP\  when the former is successful
in solving the problem to the required accuracy.
In Section~2.3, we have presented a method to compute a valid lower bound  in BP Algorithm 
by introducing a primal-dual 
pair of COPs~\eqref{eq:generalCOPIX} and \eqref{eq:generalDualCOPIX}. 
Similarly, we also can generate a valid lower bound from a solution of SDPNAL+. 
For fair comparison, we also computed valid lower bounds based on the approximate 
solutions generated by SDPNAL+ for all experiments. 

For BP Algorithm incorporated in \matBP, the parameters $(tol,\gamma,\epsilon,\delta,k_{\max},\eta_r)$ 
were set to $(10^{-5},10,10^{-13},10^{-6},20000,1.1)$.
For SDPNAL+, the parameter ``tol'' was set to $10^{-6}$ and ``stopoptions'' to $2$ so that the solver continues to run even if it encounters some stagnations. 
SDPNAL+ was terminated in $20000$ iterations even if the stopping criteria were not satisfied. 
All the computations were performed in MATLAB on a Mac Pro with Intel Xeon E5 CPU (2.7 GHZ) and 64 GB memory. 

In Tables~\ref{table:RandBinNoEq} through~\ref{table:exp_QAP},  the meaning of the notation are as follows:
``opt'' (the optimal value), ``LBv'' (a valid lower bound), ``sec'' (the computation time in second), 
``apgit'' (the total number of iterations in APG Algorithm), ``bpit'' (the number of iterations in BP Algorithm), 
``iter'' (the number of iterations in SDPNAL+), and ``term'' (the termination code). 
The termination code of SDPNAL+ has the following meaning: 0 (problem is solved to required tolerance),
-1,-2,-3 ( problem is partially solved with the primal feasibility,  the dual feasibility, both feasibility 
slightly violating the required tolerance, respectively),
%
%
1 (problem is not solved successfully due to stagnation), 
2 (the maximum number of iterations reached).
For \matBP,  its termination code  has the following meaning: 1,2 (problem is solved to required tolerance), 3 (the iteration 
is stopped due to minor improvements in valid lower bounds).

\subsection{Randomly generated sparse polynomial optimization problems with binary, box and complementarity 
constraints}

In this section, we present
 experimental results on randomly generated sparse POPs 
of the form \eqref{eq:BBCPOP}.
The objective function $f_0(\x)=\sum_{\balpha\in \FC}c_{\balpha}\x^{\balpha}$ 
was generated as follows. 
Its degree was fixed to $2, \ 3, \ 4, \ 5, \ 6,$ and $8$. 
For the support supp$f=\FC$, we took 
\[
 \FC = \bigcup_{k=1}^{\ell} \Big\{\balpha\in\Integer_+^n ~\Big|~ \sum_{i=1}^n\alpha_i \leq d, \quad \alpha_i=0 ~~\text{if}~~ i\not\in V^k  \Big\}, 
\] 
where $V^k$ $(k=1,\ldots,\ell)$ were chosen from the two types of graphs, the arrow and chordal graphs, given 
in Section 3.2; see Figure~\ref{fig:spy}. 
Each $c_{\balpha}$ was chosen from the uniform distribution over $[-1,1]$. 
$\Ibinary$ was set to $N=\{1,2,\ldots, n\}$ (hence $\Ibox=\emptyset$) in Table~\ref{table:RandBinNoEq},
and $\emptyset$ (hence $\Ibox=N$) in Table~\ref{table:RandBoxNoEq}. In each table,
$\CC$ was set to $\emptyset$ and  a set of $2n$ elements randomly chosen 
from $\bigcup_{k=1}^\ell (V^k\times V^k)$, respectively. 
For all computation, the relaxation order $\omega$ was set to $\lceil\frac{d}{2}\rceil$.

\begin{landscape}
\noindent
\begin{table}[htbp]
\scriptsize{
  \centering
  \caption{  The computation results of SDPNAL+ and \matBP \ for $\Ibinary=\{1,2,\ldots,n\}$ and $\Ibox=\emptyset$.
  } \label{table:RandBinNoEq}
  \begin{tabular}{|c|r|r|r@{~(}r@{,~}r@{:}r@{,~}r@{)~~~}r@{~(}r@{,~}r@{,~}r@{)~}|r@{~(}r@{,~}r@{:}r@{,~}r@{)~~~}r@{~(}r@{,~}r@{,~}r@{)~}|}\hline
  \multicolumn{3}{|c|}{obj. $\backslash$ constr.}  & \multicolumn{9}{c|}{$\CC=\emptyset$} & \multicolumn{9}{c|}{$\CC:$ randomly chosen} \\ \hline
   &  &  & \multicolumn{5}{c}{\matBP} & \multicolumn{4}{c|}{SDPNAL+} & \multicolumn{5}{c}{\matBP} & \multicolumn{4}{c|}{SDPNAL+} \\ 
type & d & $n(\ell)$ & \multicolumn{5}{r}{LBv (sec,apgit:bpit,term)} & \multicolumn{4}{r|}{LBv (sec,iter,term)} & \multicolumn{5}{r}{LBv (sec,apgit:bpit,term)} & \multicolumn{4}{r|}{LBv (sec,iter,term)} \\ \hline \hline
\multirow{9}{*}{\shortstack{Arrow\\($a=10$, \\$b=2$, \\$c=2$)}}
  & 2 & 1284(160) & -9.115078e2 & 2.31e2 & 10517 & 16 & 2 & -9.114681e2 & 3.65e3 & 5597 & 0 & -3.845385e2 & 2.11e2 & 9583 & 16 & 2 & -3.845080e2 & 7.65e3 & 20000 & -2\\
  & 2 & 1924(240) & -1.365696e3 & 6.71e2 & 20249 & 16 & 2 & -1.365674e3 & 6.67e3 & 7740 & 0 & -5.620671e2 & 3.69e2 & 12016 & 17 & 2 & -5.620345e2 & 1.39e4 & 20000 & -2\\
  & 2 & 2564(320) & -1.848387e3 & 3.86e2 & 9188 & 17 & 2 & \multicolumn{4}{c|}{--} & -7.870155e2 & 9.57e2 & 23251 & 18 & 2 & \multicolumn{4}{c|}{--} \\ \cline{2-21}
  & 3 & 484(60) & -6.612489e2 & 1.12e3 & 12533 & 20 & 2 & -6.612368e2 & 5.62e3 & 4269 & 0 & -1.843149e2 & 6.03e2 & 6663 & 18 & 2 & -1.843102e2 & 2.29e3 & 953 & 0\\
  & 3 & 964(120) & -1.387246e3 & 2.28e3 & 11989 & 18 & 2 & -1.387269e3 & 3.28e4 & 5230 & 0 & -3.781069e2 & 1.50e3 & 7500 & 18 & 2 & -3.781066e2 & 1.38e4 & 2333 & 0\\
  & 3 & 1444(180) & -1.996709e3 & 3.56e3 & 13913 & 20 & 2 & \multicolumn{4}{c|}{--} & -5.413793e2 & 1.87e3 & 7410 & 18 & 2 & \multicolumn{4}{c|}{--} \\ \cline{2-21}
  & 5 & 52(6) & -1.628047e2 & 2.48e3 & 14866 & 21 & 2 & -1.730651e2 & 7.62e3 & 20000 & 1 & -1.893068e1 & 1.09e3 & 6869 & 18 & 2 & -1.892955e1 & 2.92e2 & 320 & 0\\
  & 5 & 100(12) & -3.153501e2 & 4.15e3 & 12200 & 19 & 2 & -3.224262e2 & 1.39e4 & 2402 & 0 & -4.600732e1 & 2.47e3 & 7543 & 18 & 2 & -4.600496e1 & 2.23e3 & 789 & 0\\
  & 5 & 148(18) & -4.836598e2 & 6.75e3 & 14284 & 20 & 2 & \multicolumn{4}{c|}{--} & -6.020487e1 & 2.46e3 & 5495 & 19 & 2 & \multicolumn{4}{c|}{--} \\ \hline 
\multirow{6}{*}{\shortstack{Arrow\\($a=5$, \\$b=2$, \\$c=2$)}}
  & 6 & 70(22) & -9.487728e1 & 1.57e2 & 7730 & 19 & 2 & -9.487333e1 & 1.70e2 & 335 & 0 & -1.583319e1 & 7.99e1 & 4433 & 17 & 2 & -1.583321e1 & 2.50e1 & 155 & 0\\
  & 6 & 76(24) & -1.035035e2 & 2.25e2 & 10062 & 18 & 2 & -1.035003e2 & 2.89e2 & 967 & 0 & -1.639041e1 & 9.84e1 & 4872 & 18 & 2 & -1.638971e1 & 4.46e1 & 317 & 0\\
  & 6 & 82(26) & -1.112050e2 & 1.53e2 & 6407 & 19 & 2 & -1.112020e2 & 2.99e2 & 856 & 0 & -1.797953e1 & 1.29e2 & 5948 & 17 & 2 & -1.797902e1 & 2.75e1 & 154 & 0\\ \cline{2-21}
  & 8 & 64(20) & -8.471209e1 & 4.59e2 & 10985 & 18 & 2 & -8.471209e1 & 7.77e2 & 3878 & 0 & -1.446632e1 & 1.57e2 & 4503 & 17 & 2 & -1.446599e1 & 3.96e1 & 154 & 0\\
  & 8 & 70(22) & -9.238185e1 & 4.01e2 & 8686 & 19 & 2 & -9.238109e1 & 6.81e2 & 1558 & 0 & -1.767580e1 & 1.63e2 & 4272 & 17 & 2 & -1.767573e1 & 1.19e2 & 631 & 0\\
  & 8 & 76(24) & -9.720811e1 & 5.03e2 & 10009 & 19 & 2 & -9.720601e1 & 7.95e2 & 2019 & 0 & -1.717366e1 & 1.70e2 & 3996 & 17 & 2 & -1.717314e1 & 4.20e1 & 153 & 0\\ \hline \hline
\multirow{15}{*}{\shortstack{Chordal\\($r=0.1$)}}
  & 2 & 400 & -3.474744e2 & 3.60e2 & 12180 & 17 & 2 & -3.474432e2 & 7.94e3 & 2178 & 0 & -8.638335e1 & 7.34e2 & 24016 & 18 & 2 & -8.637568e1 & 2.21e4 & 20000 & -2\\
  & 2 & 800 & -1.205713e3 & 1.26e3 & 13868 & 18 & 2 & -1.205601e3 & 2.80e4 & 2334 & 0 & -1.972467e2 & 1.05e3 & 11445 & 19 & 2 & -1.972365e2 & 3.37e4 & 2116 & 0\\
  & 2 & 1600 & -3.750599e3 & 5.89e3 & 12502 & 18 & 2 & \multicolumn{4}{c|}{--} & -5.124576e2 & 8.89e3 & 20542 & 20 & 2 &  &  &  &  \\ \cline{2-21}
  & 3 & 200 & -2.065760e2 & 3.52e2 & 13101 & 18 & 2 & -2.065634e2 & 8.55e2 & 656 & 0 & -7.533718e1 & 1.54e2 & 5459 & 19 & 2 & -7.533653e1 & 1.72e2 & 158 & 0\\
  & 3 & 300 & -4.769478e2 & 1.17e3 & 11293 & 18 & 2 & -4.769380e2 & 6.51e3 & 983 & 0 & -8.210410e1 & 3.81e2 & 3617 & 18 & 2 & -8.210191e1 & 3.93e2 & 154 & 0\\
  & 3 & 400 & -9.067157e2 & 1.95e4 & 27303 & 19 & 2 & \multicolumn{4}{c|}{--} & -1.082068e2 & 4.44e3 & 5756 & 19 & 2 &  &  &  &  \\ \cline{2-21}
  & 5 & 100 & -1.097916e2 & 4.67e1 & 6843 & 17 & 2 & -1.097897e2 & 2.09e2 & 663 & 0 & -8.239900e1 & 4.48e1 & 6314 & 18 & 2 & -8.239926e1 & 6.08e1 & 486 & 0\\
  & 5 & 200 & -6.115672e2 & 1.00e3 & 8164 & 18 & 2 & -6.115882e2 & 1.55e4 & 1386 & 0 & -1.091617e2 & 4.84e2 & 4269 & 18 & 2 & -1.091602e2 & 4.07e2 & 156 & 0\\
  & 5 & 300 & -1.637701e3 & 4.56e4 & 28839 & 19 & 2 & \multicolumn{4}{c|}{--} & -1.381003e2 & 5.80e3 & 3855 & 18 & 2 &  &  &  &  \\ \cline{2-21}
  & 6 & 100 & -1.229437e2 & 3.17e1 & 4790 & 16 & 2 & -1.229360e2 & 6.72e1 & 171 & 0 & -1.000038e2 & 2.79e1 & 4242 & 17 & 2 & -1.000038e2 & 2.21e1 & 156 & 0\\
  & 6 & 150 & -3.343328e2 & 1.41e2 & 7499 & 18 & 2 & -3.343190e2 & 6.56e2 & 983 & 0 & -1.224431e2 & 7.60e1 & 4225 & 19 & 2 & -1.224399e2 & 1.46e2 & 166 & 0\\
  & 6 & 200 & -8.586312e2 & 2.14e3 & 16750 & 18 & 2 & -8.586086e2 & 1.26e4 & 1151 & 0 & -1.151008e2 & 4.89e2 & 3994 & 18 & 2 & -1.150950e2 & 8.88e2 & 159 & 0\\ \cline{2-21}
  & 8 & 100 & -2.384588e2 & 2.76e1 & 4051 & 15 & 2 & -2.384465e2 & 1.85e2 & 496 & 0 & -1.824317e2 & 4.23e1 & 6057 & 17 & 2 & -1.824326e2 & 3.59e1 & 165 & 0\\
  & 8 & 150 & -5.620558e2 & 1.92e2 & 8539 & 18 & 2 & -5.620407e2 & 8.13e2 & 1152 & 0 & -1.161660e2 & 9.62e1 & 4438 & 17 & 2 & -1.161638e2 & 8.16e1 & 158 & 0\\
  & 8 & 170 & -9.313252e2 & 1.06e3 & 13272 & 19 & 2 & -9.313161e2 & 9.53e3 & 1839 & 0 & -1.475300e2 & 4.09e2 & 5602 & 17 & 2 & -1.475289e2 & 3.96e2 & 318 & 0\\ \hline
  \end{tabular}
}  
\end{table}
\end{landscape}

\begin{landscape}
\begin{table}[htbp]
\scriptsize{
  \centering
\caption{The computation results of SDPNAL+ and \matBP \ for $\Ibinary=\emptyset$ and $\Ibox=\{1,2,\ldots,n\}$. 
} \label{table:RandBoxNoEq}
  \begin{tabular}{|c|r|r|r@{~(}r@{,~}r@{:}r@{,~}r@{)~~~}r@{~(}r@{,~}r@{,~}r@{)~}|r@{~(}r@{,~}r@{:}r@{,~}r@{)~~~}r@{~(}r@{,~}r@{,~}r@{)~}|}\hline
  \multicolumn{3}{|c|}{obj. $\backslash$ constr.}  & \multicolumn{9}{c|}{$\CC=\emptyset$} & \multicolumn{9}{c|}{$\CC:$ randomly chosen} \\ \hline
   &  &  & \multicolumn{5}{c}{\matBP} & \multicolumn{4}{c|}{SDPNAL+} & \multicolumn{5}{c}{\matBP} & \multicolumn{4}{c|}{SDPNAL+} \\ 
type & d & $n(\ell)$ & \multicolumn{5}{r}{LBv (sec,apgit:bpit,term)} & \multicolumn{4}{r|}{LBv (sec,iter,term)} & \multicolumn{5}{r}{LBv (sec,apgit:bpit,term)} & \multicolumn{4}{r|}{LBv (sec,iter,term)} \\ \hline \hline
\multirow{9}{*}{\shortstack{Arrow\\($a=10$, \\$b=2$, \\$c=2$)}}
  & 2 & 1284(160) & -9.629408e2 & 2.37e2 & 11521 & 19 & 2 & -9.629396e2 & 1.46e3 & 2151 & 0 & -4.440439e2 & 1.82e2 & 8714 & 18 & 2 & -4.440437e2 & 5.60e3 & 6475 & 0\\
  & 2 & 1924(240) & -1.432735e3 & 4.83e2 & 15027 & 17 & 2 & -1.432743e3 & 2.85e3 & 2283 & 0 & -6.704890e2 & 3.08e2 & 9502 & 18 & 2 & -6.704828e2 & 8.74e3 & 5829 & 0\\
  & 2 & 2564(320) & -1.948288e3 & 4.51e2 & 10379 & 18 & 2 & -1.948220e3 & 4.27e3 & 3622 & 0 & -8.826387e2 & 4.66e2 & 11886 & 18 & 2 & -8.826803e2 & 1.62e4 & 20000 & 0\\ \cline{2-21}
  & 3 & 164(20) & -2.815738e2 & 9.97e2 & 28142 & 19 & 2 & -2.814370e2 & 3.57e3 & 20000 & -1 & -9.442008e1 & 4.35e2 & 11791 & 18 & 2 & -9.441908e1 & 5.52e3 & 20000 & -3\\ 
  & 3 & 324(40) & -5.878139e2 & 2.90e3 & 40618 & 19 & 2 & -5.879896e2 & 1.08e4 & 20000 & -3 & -1.817788e2 & 9.68e2 & 13033 & 18 & 2 & -1.817602e2 & 1.62e4 & 18383 & 0\\
  & 3 & 484(60) & -9.342330e2 & 3.95e3 & 39757 & 21 & 2 & \multicolumn{4}{c|}{--} & -3.024086e2 & 1.71e3 & 16224 & 18 & 2 & \multicolumn{4}{c|}{--} \\ \cline{2-21}
  & 5 & 20(2) & -1.541292e2 & 4.11e3 & 30618 & 19 & 2 & -1.784141e2 & 8.18e3 & 20000 & 1 & -2.023543e1 & 1.32e3 & 9907 & 18 & 2 & -2.024220e1 & 4.36e3 & 20000 & 1\\
  & 5 & 36(4) & -2.773921e2 & 1.13e4 & 43006 & 18 & 2 & -2.841172e2 & 2.29e4 & 20000 & 1 & -4.620292e1 & 4.71e3 & 17307 & 19 & 2 & -4.622614e1 & 7.70e3 & 20000 & -3\\
  & 5 & 52(6) & -3.736479e2 & 2.08e4 & 55131 & 20 & 2 & \multicolumn{4}{c|}{--} & -6.892798e1 & 5.02e3 & 12720 & 20 & 2 & \multicolumn{4}{c|}{--} \\ \hline 
\multirow{6}{*}{\shortstack{Arrow\\($a=5$, \\$b=2$, \\$c=2$)}}
  & 6 & 28(8) & -1.543165e2 & 5.85e2 & 23268 & 19 & 2 & -1.615659e2 & 2.48e3 & 20000 & 1 & -2.575588e1 & 6.16e2 & 24285 & 18 & 2 & -2.574200e1 & 3.02e3 & 20000 & -3\\ 
  & 6 & 34(10) & -1.926331e2 & 4.70e2 & 15913 & 18 & 2 & -1.931423e2 & 5.12e3 & 20000 & 1 & -2.901087e1 & 3.48e2 & 11568 & 17 & 2 & -2.902573e1 & 3.50e3 & 20000 & -3\\
  & 6 & 40(12) & -2.229557e2 & 1.14e3 & 32413 & 20 & 2 & -2.229243e2 & 6.94e3 & 20000 & 1 & -3.887661e1 & 3.39e2 & 9359 & 18 & 2 & -3.887465e1 & 5.12e3 & 20000 & -3\\ \cline{2-21}
  & 8 & 16(4) & -2.062974e2 & 4.74e3 & 42025 & 22 & 2 & -2.064534e2 & 1.18e4 & 20000 & 1 & -7.338345e0 & 1.85e3 & 16555 & 19 & 2 & -7.335928e0 & 3.99e3 & 20000 & -2\\
  & 8 & 22(6) & -2.513190e2 & 5.41e3 & 52243 & 21 & 2 & -2.517759e2 & 3.44e4 & 20000 & 1 & -1.919867e1 & 2.57e3 & 14396 & 17 & 2 & -1.919410e1 & 1.85e4 & 20000 & -3\\
  & 8 & 28(8) & -3.020144e2 & 7.92e3 & 35596 & 18 & 2 & \multicolumn{4}{c|}{--} & -2.725916e1 & 1.95e3 & 8482 & 17 & 2 & \multicolumn{4}{c|}{--} \\ \hline \hline
\multirow{15}{*}{\shortstack{Chordal\\ ($r=0.1$)}}
  & 2 & 400 & -3.486683e2 & 5.04e2 & 17758 & 19 & 2 & -3.486410e2 & 1.02e4 & 2681 & 0 & -8.688482e1 & 3.34e2 & 11287 & 19 & 2 & -8.687822e1 & 1.30e4 & 10887 & 0\\
  & 2 & 800 & -1.206955e3 & 2.66e3 & 30865 & 18 & 2 & -1.206830e3 & 2.76e4 & 4012 & 0 & -1.975751e2 & 1.19e3 & 13331 & 18 & 2 & -1.975661e2 & 1.56e4 & 2561 & 0\\
  & 2 & 1200 & -2.385826e3 & 7.53e3 & 35674 & 18 & 2 & \multicolumn{4}{c|}{--} & -3.455533e2 & 3.04e3 & 14767 & 19 & 2 &  &  &  &  \\ \cline{2-21}
  & 3 & 100 & -5.668570e1 & 8.91e1 & 13436 & 19 & 2 & -5.668556e1 & 1.87e3 & 20000 & -3 & -5.304373e1 & 1.04e2 & 15051 & 20 & 2 & -5.304241e1 & 2.22e3 & 17843 & 0\\
  & 3 & 200 & -2.124543e2 & 1.26e3 & 37656 & 18 & 2 & -2.123967e2 & 1.46e4 & 20000 & -3 & -7.678323e1 & 4.25e2 & 12432 & 18 & 2 & -7.678226e1 & 1.66e4 & 13853 & 0\\
  & 3 & 400 & -9.481387e2 & 2.91e4 & 30987 & 20 & 2 & \multicolumn{4}{c|}{--} & -1.100166e2 & 2.72e4 & 29200 & 19 & 2 &  &  &  &  \\ \cline{2-21}
  & 5 & 80 & -6.679212e1 & 1.58e2 & 21489 & 17 & 2 & -6.678636e1 & 2.19e3 & 20000 & -3 & -6.470095e1 & 1.54e2 & 20670 & 18 & 2 & -6.470372e1 & 2.56e3 & 20000 & -3\\
  & 5 & 160 & -3.162245e2 & 1.49e3 & 24909 & 19 & 2 & -3.209856e2 & 2.29e4 & 20000 & 1 & -1.200387e2 & 1.26e3 & 21131 & 20 & 2 & -1.201353e2 & 2.46e4 & 20000 & -3\\
  & 5 & 240 & -1.056989e3 & 5.91e4 & 64866 & 19 & 2 & \multicolumn{4}{c|}{--} & -1.376493e2 & 1.25e4 & 14180 & 18 & 2 &  &  &  &  \\ \cline{2-21}
  & 6 & 50 & -5.433668e1 & 3.36e1 & 9673 & 17 & 2 & -5.433165e1 & 5.40e2 & 12140 & 0 & -5.237042e1 & 3.71e1 & 9883 & 19 & 2 & -5.236817e1 & 3.97e2 & 11692 & 0\\
  & 6 & 100 & -1.314831e2 & 2.76e2 & 24792 & 17 & 2 & -1.314177e2 & 4.17e3 & 20000 & -3 & -1.065003e2 & 2.63e2 & 23767 & 20 & 2 & -1.064412e2 & 4.73e3 & 20000 & -3\\
  & 6 & 120 & -2.130796e2 & 1.01e3 & 52465 & 19 & 2 & -2.133697e2 & 1.17e4 & 20000 & 1 & -1.122865e2 & 3.94e2 & 19744 & 19 & 2 & -1.122791e2 & 7.57e3 & 20000 & -3\\ \cline{2-21}
  & 8 & 50 & -3.542262e1 & 1.19e2 & 22885 & 18 & 2 & -3.540774e1 & 7.59e2 & 17063 & 0 & -3.305533e1 & 1.39e2 & 25186 & 18 & 2 & -3.303788e1 & 5.14e2 & 8524 & 0\\
  & 8 & 100 & -2.567746e2 & 9.45e2 & 31985 & 18 & 2 & -2.565093e2 & 8.29e3 & 20000 & 1 & -1.962108e2 & 4.56e2 & 15764 & 18 & 2 & -1.961334e2 & 8.47e3 & 20000 & -3\\
  & 8 & 120 & -3.106839e2 & 1.88e3 & 31566 & 21 & 2 & -5.129922e2 & 1.98e4 & 20000 & 1 & -1.392161e2 & 1.20e3 & 20163 & 18 & 2 & -1.397772e2 & 1.89e4 & 20000 & 1\\ \hline
  \end{tabular}
}  
\end{table}%
\end{landscape}

Tables~\ref{table:RandBinNoEq} and \ref{table:RandBoxNoEq}   show
the results on POPs with binary constraints and 
those with box constraints, respectively. 
We see that the lower bounds of the POP obtained by \matBP \ are comparable to those by SDPNAL+.
In particular, when the arrow type sparsity and $\Ibinary = \emptyset$ were used 
as in Table~\ref{table:RandBinNoEq}, 
\matBP \ could compute tighter lower bounds than SDPNAL+ in some instances. 
The computation time of \matBP \ is much smaller than SDPNAL+ in most instances. 
\matBP \ converged in $10^4  \sim  10^5$ seconds for very large scale POPs for which SDPNAL+ could not in $10^5$ seconds. 
Although \matBP \ uses only the first-order derivative information   contrary to SDPNAL+ that utilizes 
the second-order information, 
it could compute lower bounds comparable to SDPNAL+ in shorter computation time. 

As the number of variables  increases for the problems with $d = 2, 3, 5$ in 
Table~\ref{table:RandBinNoEq}, 
SDPNAL+ frequently failed to obtain a lower bound while \matBP \ succeeded to compute a valid 
lower bound.  As mentioned in Section 1, SDPNAL+ could not deal with degenerate POPs well. \matBP , 
on the other hand, provided valid lower bounds for those POPs. Similar observation can be made
for Table  \ref{table:RandBoxNoEq} where no lower bounds and computational time were 
reported for SDPNAL+ in some problems.
The numerical results in Tables~\ref{table:RandBinNoEq} and \ref{table:RandBoxNoEq}   demonstrate the robustness of \matBP \ against degeneracy.  We see from the results that
 the performance of \matBP \ is very efficient and robust for solving the DNN relaxations of large scale POPs with 
 BBC constraints.

For the ease of comparison,
in Figures \ref{fig-table1} and \ref{fig-table2}, we plot the relative gap of the valid lower bounds 
computed by SPDNAL+ and \matBP\, i.e.,
$\frac{{\rm LBv(BBCPOP)}-{\rm LBv(SDPNAL+)}}{|{\rm LBv(BBCPOP)}|}$,
and the ratios of the execution times taken by 
SDPNAL+ to those by \matBP\ for the instances tested in Tables~\ref{table:RandBinNoEq} and \ref{table:RandBoxNoEq},
respectively.
  Note that in the plots, a positive value for the relative gap of the lower bounds means that \matBP\ has computed a 
 tighter bound than SDPNAL+.
 We can observe that for the instances corresponding to $\mathcal{C} = \emptyset$ in Table 1 (Figure \ref{fig-table1}), \matBP\ 
 is a few times faster than SDPNAL+ in solving most of the instances, and it can be 10-20 times faster on a few of the 
 large instances. On the other hand, for the instances corresponding to a
  randomly chosen $\mathcal{C}$  in Table 1 (Figure \ref{fig-table1}), \matBP\ and SDPNAL+ have comparable
  efficiency in solving most of the instances. But for a  few smaller instances, 
  SDPNAL+ is 2--5  times faster while for some other larger instances, \matBP\ is at least 10-20 times faster. 
 
 For the instances in Table 2 (Figure \ref{fig-table2}), the efficiency of \matBP\ completely dominates that of SDPNAL+.
 Here, except for a few instances, \matBP\ is at least 5--20 times faster than 
 SDPNAL+ while the valid lower bounds generated by \matBP\ are comparable or much tighter than those generated by
 SDPNAL+.

\begin{figure}
 \centering
 \includegraphics[width=1.0\textwidth]{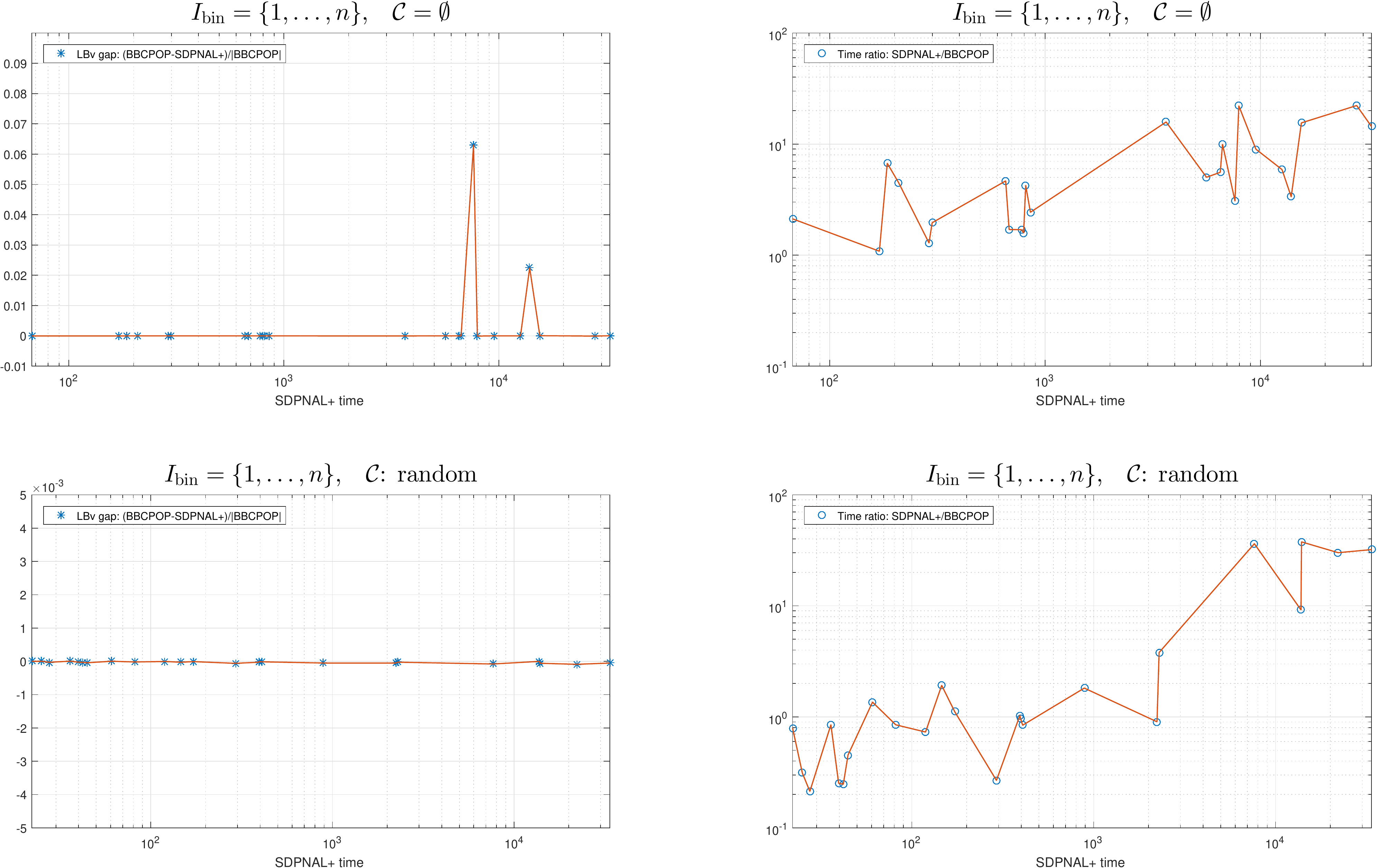}
 \caption{Comparison of the computational efficiency between \matBP\ and SDPNAL+ for 
 the instances (with $I_{\rm bin} = \{1,\ldots,n\}$ and  $I_{\rm box}  = \emptyset$)  in Table 1.
 }
 \label{fig-table1}
%
 \includegraphics[width=1.0\textwidth]{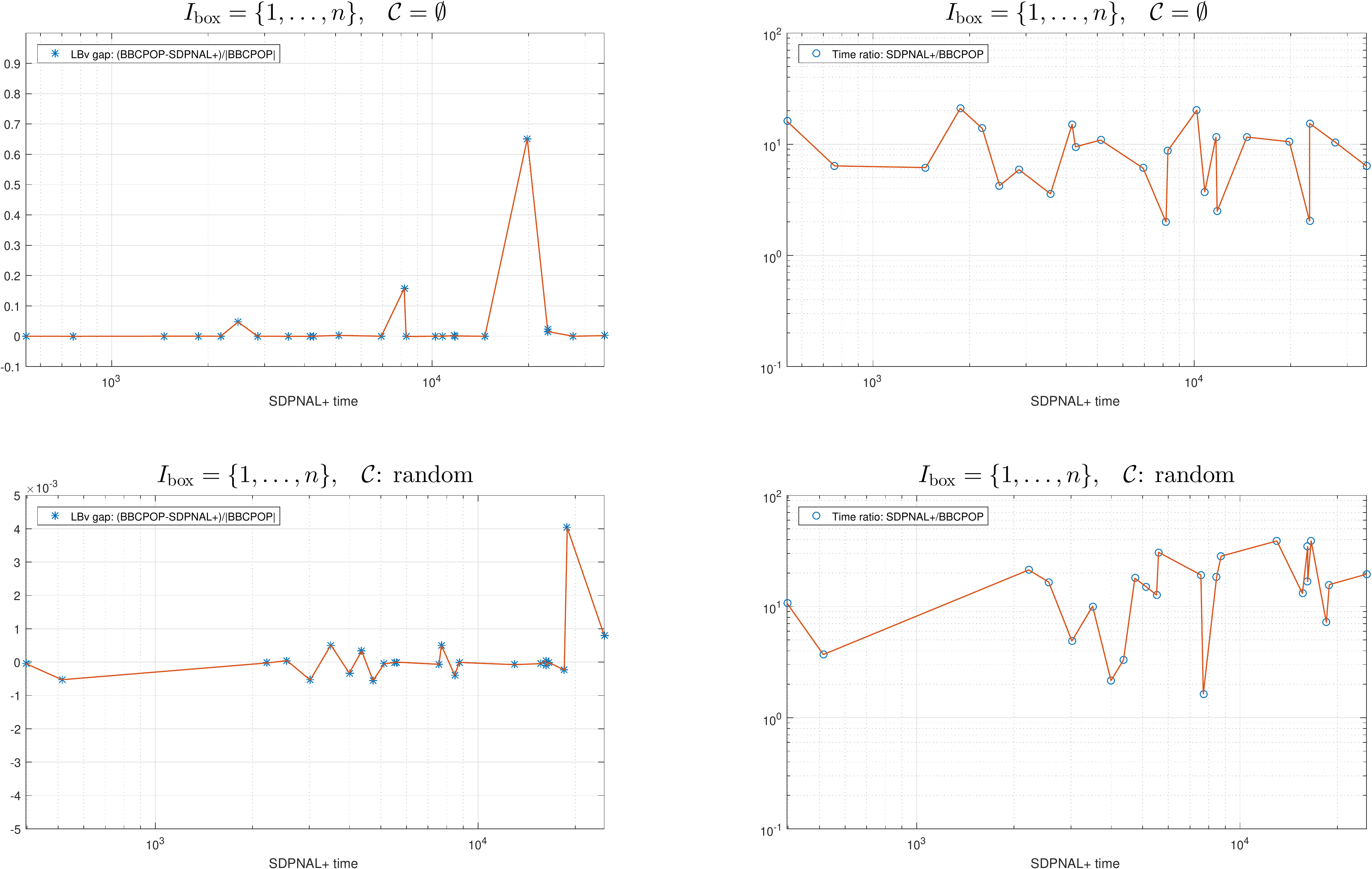}
 \caption{Comparison of the computational efficiency between \matBP\ and SDPNAL+ for 
 the instances (with $I_{\rm bin}  = \emptyset$ and $I_{\rm box} = \{1,\ldots,n\}$) in Table 2.
 }
 \label{fig-table2}
\end{figure}

\subsection{Quadratic Assignment Problem}

Various important combinatorial optimization problems such as the max-cut problem, the maximum stable set problem, and the quadratic assignment problem (QAP) are often formulated as quadratic optimization problems. 
In this section, we show the numerical performances of \matBP \ and SDPNAL+ in solving DNN relaxation problems of QAPs. 
We refer the readers to \cite{KIM2013,ARIMA2014b} for results on the max-cut and the maximum stable set problems.

Let $\A$ and $\B$ be given $r\times r$ matrices. 
Then the QAP is described as 
\begin{equation}
\zeta^*_{\text{QAP}} = \min \left\{
\inprod{\X}{(\A\X\B^T) }
\mid \X \text{ is a permutation matrix}
\right\}. \label{eq:QAPmat}
\end{equation}
Here we characterize a permutation matrix $\X$ as follows:
\begin{align*}
  &\X\in[0,1]^{r\times r}, ~~ \X\e = \X^t\e = \e,\\
  &X_{ik}X_{jk} = X_{ki}X_{kj} = 0 ~(i,j,k=1,2,\ldots,r; ~~ i \neq j).
\end{align*}
Let $\X=(\x^1,\ldots,\x^r) \in \Real^{r \times r}$, where $\x^p$ denotes the $p$-th column vector of $\X$. 
Let $n=r^2$. 
Arranging  the columns $\x^p$ $(1 \leq p \leq r)$  of $\X$ vertically into a long vector $\x=[\x^1;\ldots;\x^r]$,
we obtain the following reformulation of \eqref{eq:QAPmat}:
\begin{equation}
  \min\Big\{\x^T(\B\otimes\A)\x  ~\Big|~ 
  \begin{array}{l}
    \x\in[0,1]^n, \quad (\I\otimes\e^T)\x=(\e^T\otimes\I)\x=\e,\\
    x_ix_j = 0 \quad (i,j\in J_p; i\neq j; 1\leq p \leq 2r),
  \end{array}\Big\} \label{eq:QAPvec}
\end{equation}
where
\begin{align*}
  J_i &=\{(i-1)r+1,(i-1)r+2,\ldots,(i-1){r}+r\} \ (1 \leq i \leq r), \\
  J_{r+j} &= \{j,j+r,\ldots,j+(r-1)r\} \ (1 \leq j \leq r).
\end{align*}
Since it contains $2r$ equality constraints, 
the Lagrangian relaxation is applied to \eqref{eq:QAPvec}. 
More precisely, let $\C = [\I\otimes\e^T; \e^T\otimes\I]$ and $\d = [\e;\e]$, 
and consider the following Lagrangian relaxation problem of \eqref{eq:QAPvec} with a penalty parameter $\lambda> 0$:
\begin{equation}
  \min\Big\{\x^T(\B\otimes\A)\x + \lambda\frac{\|(\B\otimes\A)\|}{ \left\|\left(\begin{smallmatrix} \d^T\d & \d^t\C \\ \C\d & \C^T\C \end{smallmatrix}\right)\right\|} \|\C\x-\d\|^2 ~\Big|~ 
  \begin{array}{ll}
  \x\in[0,1]^n,\\ 
  x_ix_j=0 & (i,j\in J_p, i\neq j,\\
           &  1\leq p \leq 2r)
  \end{array}
  \Big\}, 
  \label{eq:QAPLag}
\end{equation}
which is a special case of POP~\eqref{POP0} or~\eqref{eq:BBCPOP}. Thus, we can apply the DNN relaxation 
discussed in Section 3 to \eqref{eq:QAPLag} and obtain a COP of the form~\eqref{eq:relaxOfPOP}, 
which serves as the Lagrangian-DNN relaxation \cite{KIM2013,ARIMA2017} of QAP~\eqref{eq:QAPvec}.
We set $\lambda=10^5$ for all instances, and solved it by \matBP \ and SDPNAL+. 
For comparison, SDPNAL+  was also applied to another DNN relaxation, AW+ formulation \cite{POVH2007}, 
which is a benchmark formulation 
for SDPNAL+ in the paper \cite{YST2015}. 
We mention that the AW+ formulation does not use the Lagrangian relaxation.

\begin{table}
\centering
\scriptsize
\caption{Computational results on small QAP instances. 
Both \matBP \ and SDPNAL+ were applied to 
the Lagrangian-DNN relaxation.
}\label{table:exp_smallQAP}
  \begin{tabular}{|c|r@{~(}r@{, }r@{:}r@{)~}|r@{~(}r@{, }r@{)~}|r@{~(}r@{, }r@{)~}|}\hline
     & \multicolumn{4}{c|}{\matBP} & \multicolumn{3}{c|}{SDPNAL+} & \multicolumn{3}{c|}{SDPNAL+ (AW+)} \\
instance & LBv & sec & apgit & bpit & LBv & sec & iter & LBv & time & iter \\\hline
chr12a & 9551.9 & 8.98e0 & 1984 & 16 & 9304.2 & 1.06e2 & 20000 & 9551.9 & 1.28e1 & 1553 \\
chr12b & 9741.8 & 1.05e1 & 2056 & 16 & 9669.5 & 1.17e2 & 20000 & 9741.9 & 1.10e1 & 1553 \\
chr12c & 11155.9 & 8.63e0 & 2042 & 17 & 10911.0 & 1.28e2 & 20000 & 11156.0 & 3.45e1 & 3591 \\
had12 & 1651.9 & 8.38e0 & 1488 & 16 & 1616.0 & 1.60e2 & 20000 & 1652.0 & 1.72e1 & 1522 \\
nug12 & 567.9 & 2.04e1 & 3701 & 15 & 565.6 & 1.78e2 & 20000 & 567.8 & 1.60e1 & 1054 \\
rou12 & 235521.1 & 4.62e1 & 6681 & 23 & 229651.9 & 1.44e2 & 20000 & 235518.8 & 6.47e1 & 5464 \\
scr12 & 31407.6 & 2.33e1 & 3843 & 28 & 30827.0 & 1.62e2 & 20000 & 31410.0 & 7.07e0 & 495 \\
tai12a & 224411.0 & 1.68e1 & 3154 & 22 & 219806.4 & 1.42e2 & 20000 & 224416.0 & 6.57e0 & 426 \\
tai12b & 39464040.0 & 1.47e1 & 2716 & 19 & 38647340.0 & 1.41e2 & 20000 & 39464910.0 & 1.95e1 & 2355 \\\hline
  \end{tabular}
\end{table}

In Table~\ref{table:exp_smallQAP}, we see that SDPNAL+ applied to the Lagrangian-DNN relaxation 
of QAP~\eqref{eq:QAPvec} shows inferior results compared to those obtained from \matBP.
This is mainly because the Lagrangian-DNN relaxation is highly degenerated.  
\matBP, on the other hand, could solve such ill-conditioned problems successfully, which demonstrates
the robustness of BP algorithm for ill-conditioned COPs. 
As the numerical results in \cite{YST2015} show that the AW+ formulation works well for SDPNAL+, 
it is used in the subsequent experiments on large scale problems.

\begin{table}
\scriptsize
\centering
\caption{Computational results on large-scale QAP instances. \matBP was applied to the Lagrangian-DNN relaxation, 
and SDPNAL+ to the AW+ formulation.
}
\label{table:exp_QAP}
  \begin{tabular}{|c||r|r@{ (}r@{, }r@{:}l@{, }r@{) }|r@{ (}r@{, }r@{, }r@{) }|} \hline
 & &\multicolumn{5}{c|}{\matBP} & \multicolumn{4}{c|}{SDPNAL+ (AW+)}  \\
instances & opt & LBv & sec & apgit & bpit & term & LBv & sec & iter & term \\ \hline\hline
chr15a & 9896.0 & 9895.9 & 3.78e1 & 2572 & 18 & 3 & 9892.2 & 2.21e2 & 6714 & 0 \\
chr15b & 7990.0 & 7989.9 & 3.25e1 & 2335 & 16 & 3 & 7989.8 & 5.25e1 & 2066 & 0 \\
chr15c & 9504.0 & 9503.9 & 3.11e1 & 2346 & 19 & 1 & 9504.0 & 4.26e1 & 1432 & 0 \\
chr18a & 11098.0 & 11097.9 & 1.27e2 & 3608 & 19 & 2 & 11088.3 & 2.77e2 & 6586 & 0 \\
chr18b & 1534.0 & 1532.5 & 2.76e2 & 6117 & 17 & 1 & 1533.9 & 5.57e1 & 917 & 0 \\
chr20a & 2192.0 & 2191.9 & 1.97e2 & 3264 & 17 & 1 & 2191.7 & 4.55e2 & 6402 & 0 \\
chr20b & 2298.0 & 2298.0 & 1.25e2 & 2194 & 17 & 1 & 2297.9 & 2.91e2 & 3412 & 0 \\
chr20c & 14142.0 & 14141.7 & 1.86e2 & 3352 & 20 & 2 & 14139.4 & 3.29e2 & 5689 & 0 \\
chr22a & 6156.0 & 6156.0 & 2.90e2 & 2926 & 19 & 1 & 6153.4 & 7.65e2 & 5791 & 0 \\
chr25a & 3796.0 & 3795.9 & 5.62e2 & 3139 & 18 & 1 & 3795.8 & 1.06e3 & 3866 & 0 \\ \hline
nug20 & 2570.0 & 2506.0 & 2.24e2 & 3024 & 18 & 1 & 2505.9 & 2.21e2 & 2056 & 0 \\
nug25 & 3744.0 & 3625.4 & 8.68e2 & 3911 & 19 & 1 & 3625.3 & 6.65e2 & 2053 & 0 \\
nug30 & 6124.0 & 5948.9 & 2.30e3 & 3634 & 21 & 1 & 5948.7 & 2.10e3 & 2300 & 0 \\ \hline
bur26a & 5426670.0 & 5426095.0 & 2.17e3 & 7280 & 20 & 2 & 5425904.2 & 3.33e3 & 9350 & 0 \\
bur26b & 3817852.0 & 3817277.4 & 2.14e3 & 7265 & 20 & 2 & 3817148.6 & 2.37e3 & 6608 & 0 \\
bur26c & 5426795.0 & 5426203.7 & 2.70e3 & 9049 & 21 & 2 & 5426457.7 & 6.19e3 & 18182 & 0 \\
bur26d & 3821225.0 & 3820014.6 & 1.90e3 & 6470 & 20 & 2 & 3820601.2 & 3.45e3 & 10800 & 0 \\
bur26e & 5386879.0 & 5386572.0 & 7.77e2 & 3110 & 21 & 2 & 5386585.4 & 3.52e3 & 11011 & 0 \\
bur26f & 3782044.0 & 3781834.8 & 6.45e2 & 2545 & 21 & 2 & 3781760.8 & 2.67e3 & 8568 & 0 \\
bur26g & 10117172.0 & 10116571.1 & 5.01e2 & 2117 & 20 & 2 & 10116504.8 & 2.23e3 & 8341 & 0 \\
bur26h & 7098658.0 & 7098236.7 & 6.10e2 & 2560 & 22 & 2 & 7098381.4 & 2.15e3 & 8728 & 0 \\ \hline
tai30a & 1818146.0 & 1706789.7 & 8.44e2 & 1357 & 21 & 2 & 1706814.3 & 2.18e3 & 2501 & 0 \\
tai30b & 637117113.0 & 598629428.0 & 4.90e3 & 7388 & 21 & 2 & 598979464.0 & 8.21e3 & 12701 & 0 \\
tai35a & 2422002.0 & 2216540.3 & 2.38e3 & 1482 & 22 & 2 & 2216573.2 & 3.33e3 & 1751 & 0 \\
tai35b & 283315445.0 & 269532369.0 & 8.65e3 & 6272 & 21 & 2 & 269624118.0 & 1.32e4 & 8153 & 0 \\
tai40a & 3139370.0 & 2843198.6 & 4.52e3 & 1223 & 21 & 2 & 2843245.1 & 1.90e4 & 2801 & 0 \\
tai40b & 637250948.0 & 608808415.0 & 3.04e4 & 8167 & 21 & 2 & 608955916.0 & 3.40e4 & 6794 & 0 \\
tai50a & 4938796.0 & 4390743.4 & 4.27e4 & 3537 & 21 & 2 & 4390862.9 & 4.82e4 & 2651 & 0 \\
tai50b & 458821517.0 & 431090745.0 & 4.86e4 & 5072 & 21 & 2 & 431074160.0 & 9.20e4 & 7300 & 0 \\ \hline
sko42 & 15812.0 & 15332.6 & 1.89e4 & 3898 & 21 & 2 & 15332.5 & 2.18e4 & 3593 & 0 \\
sko49 & 23386.0 & 22650.2 & 3.62e4 & 3383 & 21 & 2 & 22650.4 & 5.85e4 & 3517 & 0 \\ \hline
lipa40a & 31538.0 & 31536.5 & 9.24e3 & 3062 & 22 & 2 & 31538.0 & 1.44e4 & 3497 & 0 \\
lipa40b & 476581.0 & 476563.3 & 1.18e4 & 4405 & 21 & 2 & 476581.0 & 4.61e3 & 935 & 0 \\
lipa50a & 62093.0 & 62089.6 & 5.35e4 & 2061 & 21 & 2 & 62093.0 & 6.69e4 & 3099 & 0 \\
lipa50b & 1210244.0 & 1210195.2 & 6.84e4 & 2795 & 21 & 2 & 1210244.0 & 3.12e4 & 1554 & 0 \\ \hline
tho40 & 240516.0 & 226490.1 & 1.82e4 & 4826 & 21 & 2 & 226482.4 & 1.38e4 & 2500 & 0 \\
wil50 & 48816.0 & 48121.0 & 6.66e4 & 5453 & 21 & 2 & 48120.1 & 6.97e4 & 4178 & 0 \\ \hline
  \end{tabular}
\end{table}

The results for  large-scale QAPs are shown in Table~\ref{table:exp_QAP}.
For most problems, \matBP \ produced comparable lower bounds with SDPNAL+ 
while it terminated slightly faster than SDPNAL+ in many cases.
We note that the original QAP is not in the form of \eqref{POP0} to which \matBP \ can be applied. 
The results in Table~\ref{table:exp_QAP}, however, indicate that \matBP \ can solve the Lagrangian-DNN 
relaxations of very large-scale QAPs with high efficiency. 

Finally, we note that various equivalent DNN relaxation formulations for QAPs have been proposed. 
The performance of the BP method \ and SDPNAL+ are 
expected to differ from one formulation to another; see Section 7 of \cite{ITO2017c} for more  numerical 
results and investigation on the differences in the formulations and the performance of the
BP method and SDPNAL+.

\section{Concluding remarks}

We have introduced a Matlab software package, \matBP, to compute  valid lower bounds for the optimal values of
large-scale sparse POPs with binary, box and complementarity (BBC) constraints.
The performance of \matBP\    
has been illustrated with the numerical results  in Section 5 
on various {large-scale} sparse POP instances  with 
BBC constraints in comparison to those of SDPNAL+.

In terms of the number of variables, 
\matBP\ can handle efficiently larger POPs  than the other available
software packages and numerical methods for POPs such as   GlotiPoly  \cite{GLOPTIPOLY2003}, SOSTOOLs \cite{SOSTOOLS}, SparsePOP \cite{WAKI2008}, BSOS \cite{TOH17}, and SBSOS~\cite{WEISSER17}. 
\matBP\ 
not only automatically generate sparse DNN relaxations of a given
POP with BBC constraints,  it also provides a robust 
numerical method, BP Algorithm, specially designed for solving the DNN relaxation problems.
This is in contrast to 
the other software packages such as GloptiPoly, SparsePOP, 
SOSTOOLS, BSOS, and SBSOS that need to rely on an available SDP solver.
As a result, their performance depends on the SDP solver chosen.
One important feature of the lower bounds computed by \matBP\ is that it is theoretically guaranteed to be valid, 
whereas the lower bounds obtained by the other software packages, however tight they may be, is not
guaranteed to be valid unless the relaxation problem is solved to very high accuracy.

For general POPs with polynomial equality and inequality constraints, Lagrangian relaxations can be applied for
 \matBP.  
The Lagrangian relaxation approach was successfully used to solve combinatorial QOPs as shown in
 \cite{KIM2013} and in Section 5.2, and can be extended to general POPs.
General POPs are difficult problems to solve with a computational method. It has been our experience that
only the combination of SparsePOP \cite{WAKI2008} and the implementation of the primal-dual interior-point method, SeDuMi \cite{STURM99},
could successfully deal with  general POPs of moderate size. Other softwares including SDPNAL+  \cite{YST2015} have been unsuccessful to provide valid lower bounds for general POPs because the relaxation problems
are generally highly degenerate. As a future work,
we plan to  investigate the Lagrangian relaxation methods together with \matBP\ for solving DNN relaxations of
general POPs.

\bibliographystyle{plain}
\begin{small}

\end{small}
\end{document}